\newtheorem{thm}{Theorem}[section]
\newtheorem{cor}[thm]{Corollary}
\newtheorem{lem}[thm]{Lemma}
\newtheorem{prop}[thm]{Proposition}
\newtheorem{exm}[thm]{Example}
\newtheorem{defn}[thm]{Definition}
\newtheorem{rem}[thm]{Remark}
\numberwithin{equation}{section}
\begin{document}

\oddsidemargin 0mm
\evensidemargin 0mm

\thispagestyle{plain}

\vspace{5cc}
\begin{center}


{\large\bf Zero divisors of support size $3$ in group algebras and trinomials divided by irreducible polynomials over $GF(2)$}
\rule{0mm}{6mm}\renewcommand{\thefootnote}{}
\footnotetext{{\scriptsize 2010 Mathematics Subject Classification. 20C07; 20K15; 16S34. }\\
{\rule{2.4mm}{0mm}Keywords and Phrases. Group algebra, torsion-free, zero divisor, support size, trinomial.}}

\vspace{1cc}
{\large\it Alireza Abdollahi and Zahra Taheri}


\vspace{1cc}
\parbox{27cc}{{\small

\textbf{Abstract.} 
 A famous conjecture about group algebras of torsion-free groups states that there is no zero divisor in such group algebras. A recent approach to settle the conjecture is to show the non-existence of zero divisors with respect to the length of possible ones, where by the length we mean the size of the support of an element of the group algebra. The case length $2$ cannot be happen. The first unsettled case is the existence of zero divisors of length $3$. 
 Here we study possible length $3$ zero divisors in the rational group algebras and in the group algebras over the field $\mathbb{F}_p$ with $p$ elements for some prime $p$.
 As a consequence we prove that the rational group algebras of torsion-free groups which are residually finite $p$-group for some prime $p\neq 3$ have no zero divisor of length $3$.   We note that the determination of all zero divisors of length $3$ in  group algebras over $\mathbb{F}_2$ of cyclic groups is equivalent to find all trinomials (polynomials with 3 non-zero terms) divided by irreducible polynomials over $\mathbb{F}_2$. The latter is a subject  studied in coding theory and we add here some results, e.g. we show that $1+x+x^2$ is a zero divisor in the group algebra  over $\mathbb{F}_2$ for some element $x$ of the group  if and only if $x$ is of finite order divided by $3$ and we find all $\beta$ in the group algebra of the shortest length such that $(1+x+x^2)\beta=0$; and $1+x^2+x^3$ or $1+x+x^3$ is a zero divisor in the group algebra over $\mathbb{F}_2$ for some element $x$ of the group  if and only if $x$ is of finite order divided by $7$.
 }}

\end{center}

\vspace{1cc}


\section{\bf Introduction}\label{Int}

\vspace{2cc}

Let $\mathbb{F}$ be a field and $G$ a group. We say that a non-zero element $\alpha$ in the group algebra $\mathbb{F}[G]$ is a zero divisor if $\alpha \beta=0$ for some non-zero $\beta \in \mathbb{F}[G]$. A famous conjecture about group algebras states that there is no zero divisor in $\mathbb{F}[G]$ whenever $G$ is  torsion-free (see \cite{K,K2}). The support of an element $\alpha=\sum_{g\in G} \alpha_g g \in \mathbb{F}[G]$, denoted by $supp(\alpha)$ is the set $\{g\in G \;|\; \alpha_g\neq 0\}$. The length of an element of $\mathbb{F}[G]$ is defined as the size of its support.   

 A recent approach to settle the conjecture is to show the non-existence of zero divisors with respect to the length of possible ones (see \cite{Ab-Ta1, AJ,pascal,S}). The case length $2$ is not so hard, however non-trivial, and has been done (e.g. \cite[Theorem 2.1]{pascal}). The first unsettled case is the existence of zero divisors of length $3$. 
 Here we study possible length $3$ zero divisors in the rational group algebras. We first note that
 \begin{thm}\label{Q}
Let $G$ be a torsion-free group and $\alpha=\alpha_1h_1+\alpha_2h_2+\alpha_3h_3$ be a zero divisor with the support size $3$ in $\mathbb{Q}[G]$. Then either $h_1+h_2+h_3$ or $h_1-h_2+h_3$  is a zero divisor in $\mathbb{Z}[G]$.
\end{thm}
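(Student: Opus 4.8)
The plan is to show that the three coefficients $\alpha_1,\alpha_2,\alpha_3$ must in fact be $\pm 1$; once this is established, a short analysis of sign patterns produces the normal forms. Since multiplying $\alpha$ by a nonzero rational scalar affects neither the hypothesis (being a zero divisor) nor the conclusion (which refers only to the group elements $h_i$ and the pattern of signs), I would first clear denominators and cancel common factors so that $\alpha_1,\alpha_2,\alpha_3\in\mathbb{Z}$ with $\gcd(\alpha_1,\alpha_2,\alpha_3)=1$. Picking $\beta\neq 0$ with $\alpha\beta=0$ and clearing its denominators, I may likewise take $\beta\in\mathbb{Z}[G]\setminus\{0\}$. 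The purpose of passing to coprime integer coefficients is that divisibility by each prime can now be tested separately.

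The heart of the argument is a reduction modulo an arbitrary prime $p$. After dividing $\beta$ by the highest power of $p$ dividing all of its coefficients, I may assume that $\beta$ is not identically $0$ modulo $p$, so its image $\bar\beta$ in $\mathbb{F}_p[G]$ is nonzero; applying the ring homomorphism $\mathbb{Z}[G]\to\mathbb{F}_p[G]$ to $\alpha\beta=0$ gives $\bar\alpha\,\bar\beta=0$ in $\mathbb{F}_p[G]$. Because $h_1,h_2,h_3$ are distinct, the support of $\bar\alpha=\bar\alpha_1h_1+\bar\alpha_2h_2+\bar\alpha_3h_3$ is exactly $\{h_i : p\nmid\alpha_i\}$, so its size equals the number of $\alpha_i$ not divisible by $p$. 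Now $\bar\alpha$ annihilates $\bar\beta\neq 0$, hence is a zero divisor in $\mathbb{F}_p[G]$, and $G$ is torsion-free; therefore $\bar\alpha$ can be neither a nonzero length-$1$ element (such an element is a unit times a group element, hence a non-zero-divisor) nor a length-$2$ element, since length-$2$ zero divisors do not occur over any field in group algebras of torsion-free groups. Coprimality forbids $\bar\alpha=0$. Thus $\bar\alpha$ has support size exactly $3$, i.e.\ $p$ divides none of $\alpha_1,\alpha_2,\alpha_3$. As $p$ was arbitrary, no prime divides any $\alpha_i$, so $\alpha_1,\alpha_2,\alpha_3\in\{1,-1\}$. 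The single external input here is the nonexistence of length-$2$ zero divisors over $\mathbb{F}_p$, the length-$2$ case recorded in the introduction; this is the only place torsion-freeness is used (a length-$2$ element is $c_1g_1+c_2g_2=g_1(c_1+c_2t)$ with $t$ of infinite order, whose only annihilator is $0$), and it is the step I expect to be the main obstacle, since without it the reduced support could drop to $2$ with no contradiction.

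Finally I would read off the normal form. The mod-$p$ analysis gives $\alpha=\epsilon_1h_1+\epsilon_2h_2+\epsilon_3h_3$ with each $\epsilon_i\in\{1,-1\}$, and $\alpha$ is a zero divisor in $\mathbb{Z}[G]$ because $\alpha\beta=0$ with $\beta\in\mathbb{Z}[G]\setminus\{0\}$. Multiplying by $-1$ (which preserves being a zero divisor and flips all three signs) and permuting the symmetric roles of $h_1,h_2,h_3$ (which permutes the $\epsilon_i$), every sign pattern reduces to one of exactly two normal forms: all three signs equal, giving $h_1+h_2+h_3$, or exactly one sign negative, a representative of which is $h_1-h_2+h_3$. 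Hence one of these two elements is a zero divisor in $\mathbb{Z}[G]$, as claimed.
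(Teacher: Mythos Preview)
Your proof is correct and follows essentially the same route as the paper's: clear denominators to get coprime integer coefficients, reduce modulo an arbitrary prime $p$, and invoke the nonexistence of zero divisors of support size at most $2$ over any field in the torsion-free case (the paper cites \cite[Theorem 2.1]{pascal}) to force $p\nmid\alpha_i$ for all $i$, hence $\alpha_i\in\{\pm1\}$. Your write-up is in fact more careful than the paper's on one point: you explicitly divide $\beta$ by the highest power of $p$ in its coefficients to guarantee $\bar\beta\neq 0$ in $\mathbb{F}_p[G]$, a step the paper's proof suppresses when it asserts that the reduced $\alpha'$ is a zero divisor mod $p$.
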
 
We then prove that the rational group algebras of torsion-free groups which are residually finite $p$-groups for some prime $p\neq 3$ have no zero divisor of length $3$.
   \begin{thm}\label{resi-p-group}
Let $G$ be a residually finite $p$-group for some prime number $p\not=3$. Then $\mathbb{Q}[G]$ has no zero divisor whose support size is $3$. Furthermore, if $G$ is a residually finite $3$-group then there exist no zero divisor of the form $h_1-h_2+h_3$ in $\mathbb{Q}[G]$.
\end{thm}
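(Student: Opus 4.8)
The plan is to combine Theorem \ref{Q} with the \emph{local} structure of the group algebra of a finite $p$-group over $\mathbb{F}_p$. As $G$ is torsion-free (the standing hypothesis, cf.\ the abstract), Theorem \ref{Q} applies: any support-$3$ zero divisor in $\mathbb{Q}[G]$ forces one of $\alpha=h_1+h_2+h_3$ or $\alpha=h_1-h_2+h_3$ to be a zero divisor in $\mathbb{Z}[G]$. Hence it suffices to rule these two out, and the entire argument will hinge on the value of the augmentation $\epsilon(\alpha)=\sum_g\alpha_g$, which equals $3$ in the first case and $1$ in the second.

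I would then invoke the standard fact that for a finite $p$-group $P$ the algebra $\mathbb{F}_p[P]$ is a local ring whose maximal ideal is the augmentation ideal $I=\ker(\epsilon)$, and that $I$ is nilpotent. Consequently every $\gamma\in\mathbb{F}_p[P]$ with $\epsilon(\gamma)\neq 0$ is a unit: writing $\gamma=\epsilon(\gamma)\cdot 1-\delta$ with $\delta=\epsilon(\gamma)\cdot 1-\gamma\in I$ nilpotent exhibits $\gamma$ as a nonzero scalar times $1-\delta'$ for a nilpotent $\delta'$, which is invertible. This is exactly where $p\neq 3$ enters: the augmentation of $h_1+h_2+h_3$ is $3$, a unit in $\mathbb{F}_p$ precisely when $p\neq 3$, whereas the augmentation of $h_1-h_2+h_3$ is $1$, a unit for every $p$ — which is what produces the additional conclusion in the $3$-group case.

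Now suppose for contradiction that $\alpha\beta=0$ in $\mathbb{Z}[G]$ with $\beta\neq 0$; after dividing out the content I may assume $\beta$ is primitive, i.e.\ the gcd of its integer coefficients is $1$, so that some coefficient $\beta_g$ is coprime to $p$. Using residual finiteness I choose a normal subgroup $N$ of finite index with $P:=G/N$ a finite $p$-group such that the quotient map is injective on $supp(\beta)$: for each of the finitely many pairs of distinct $g,g'\in supp(\beta)$ pick a finite $p$-group quotient in which $g^{-1}g'\neq 1$ survives, and let $N$ be the intersection of the corresponding kernels, so $G/N$ embeds in a finite product of finite $p$-groups and is itself a finite $p$-group. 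Applying the ring homomorphism $\mathbb{Z}[G]\to\mathbb{F}_p[P]$ to $\alpha\beta=0$ yields $\bar\alpha\,\bar\beta=0$. Since $\epsilon(\bar\alpha)=\epsilon(\alpha)\bmod p\neq 0$ ($p\neq 3$ in the first case, any $p$ in the second), $\bar\alpha$ is a unit, whence $\bar\beta=0$. But the images of $supp(\beta)$ are distinct in $P$, so $\bar\beta=\sum_{g\in supp(\beta)}(\beta_g\bmod p)\,\bar g=0$ forces every $\beta_g\equiv 0\pmod p$, contradicting primitivity.

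The main obstacle — really the only delicate point — is to pass to a finite quotient without destroying $\beta$: one must simultaneously arrange that $supp(\beta)$ injects into $P$ (so no cancellation occurs modulo $N$) and track primitivity (so no cancellation occurs modulo $p$), while noting that the augmentation of $\bar\alpha$ is unaffected by any collapsing of $supp(\alpha)$, since $\epsilon$ records only the total coefficient sum. Everything else reduces to the locality of $\mathbb{F}_p[P]$ for finite $p$-groups $P$ and to the sign/augmentation bookkeeping already furnished by Theorem \ref{Q}.
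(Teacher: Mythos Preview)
Your proposal is correct and follows essentially the same route as the paper: invoke Theorem~\ref{Q} to reduce to $\alpha\in\{h_1+h_2+h_3,\;h_1-h_2+h_3\}$, pass to a finite $p$-quotient via residual finiteness, and use that elements of nonzero augmentation in $\mathbb{F}_p[P]$ are units (Proposition~\ref{p-group}). The paper's write-up is terser---it treats $p=2$ by citing Theorem~\ref{resi-2-group} and for $p>2$ leaves the passage to a finite quotient implicit---whereas you spell that passage out and make the nice observation that only $supp(\beta)$ needs to inject into $G/N$, since the augmentation of $\bar\alpha$ is insensitive to collisions in $supp(\alpha)$.
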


 We note that the determination of all zero divisors of length $3$ in  group algebras over $\mathbb{F}_2$ of cyclic groups is equivalent to find all trinomials (polynomials with 3 non-zero terms) divided by irreducible polynomials over $\mathbb{F}_2$. 
  The latter is a subject  studied in coding theory \cite{gol} and we add here some results. 
\begin{prop}\label{c-1}
Let $G$ be a group, $x\in G$, $n:=o(x)>2$.  Suppose that $\alpha\in \mathbb{F}[G]$, $1\in supp(\alpha)\subseteq \langle x\rangle$ is a zero divisor so that $\alpha\beta=0$ for some non-zero $\beta\in \mathbb{F}[G]$ where $1\in supp(\beta)$ and $|supp(\beta)|$ is minimum with respect to the property $ \alpha\beta=0$. Then, there exists at least one irreducible factor of $X^n-1$ in $\mathbb{F}[X]$ which divides $\alpha(X)$. Also, there exists at least one irreducible factor of $X^n-1$ in $\mathbb{F}[X]$ which divides $\beta(X)$.
\end{prop}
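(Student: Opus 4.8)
The plan is to reduce the whole statement to the cyclic group algebra $\mathbb{F}[\langle x\rangle]\cong \mathbb{F}[X]/(X^n-1)$ and then exploit the fact that an element of this quotient ring is a unit precisely when its representative is coprime to $X^n-1$. The two conclusions (about $\alpha$ and about $\beta$) will then both follow from the observation that neither $\alpha$ nor $\beta$ can be invertible.

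First I would decompose $\beta$ along the right cosets of $\langle x\rangle$ in $G$. Fixing a transversal $\{g_j\}$, write $\beta=\sum_j \beta^{(j)}g_j$ with each $\beta^{(j)}\in \mathbb{F}[\langle x\rangle]$. Since $\alpha\in\mathbb{F}[\langle x\rangle]$, we have $\alpha\beta=\sum_j(\alpha\beta^{(j)})g_j$, and because $(\alpha\beta^{(j)})g_j$ is supported in the single coset $\langle x\rangle g_j$, the cosets being pairwise disjoint, the equation $\alpha\beta=0$ is equivalent to $\alpha\beta^{(j)}=0$ for every $j$. As $\beta\neq 0$, at least one component $\beta^{(j_0)}$ is non-zero, and already this shows $\alpha$ is a zero divisor inside $\mathbb{F}[\langle x\rangle]$.

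Next I would use the minimality hypothesis to push $\beta$ entirely into $\langle x\rangle$. Picking any $x^{k_0}\in supp(\beta^{(j_0)})$, the element $\beta^{(j_0)}x^{-k_0}$ is non-zero, lies in $\mathbb{F}[\langle x\rangle]$, contains $1$ in its support, is annihilated by $\alpha$ (as $\langle x\rangle$ is abelian), and has support size $|supp(\beta^{(j_0)})|\le|supp(\beta)|$. By the minimality of $|supp(\beta)|$ among such annihilators containing $1$, this forces equality; since the coset supports are disjoint we get $\sum_j|supp(\beta^{(j)})|=|supp(\beta)|=|supp(\beta^{(j_0)})|$, so only the $j_0$ term survives and $\beta=\beta^{(j_0)}g_{j_0}$. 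The requirement $1\in supp(\beta)$ then gives $g_{j_0}\in\langle x\rangle$, whence $\beta\in\mathbb{F}[\langle x\rangle]$ and $\beta(X)$ is well defined.

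Finally, identifying $\mathbb{F}[\langle x\rangle]$ with $\mathbb{F}[X]/(X^n-1)$ via $x\mapsto X$, the relation $\alpha\beta=0$ reads $X^n-1\mid \alpha(X)\beta(X)$ in $\mathbb{F}[X]$, where $\alpha(X),\beta(X)$ are the representatives of degree $<n$. If no irreducible factor of $X^n-1$ divided $\alpha(X)$, then $\gcd(\alpha(X),X^n-1)=1$, so $\alpha(X)$ would be invertible modulo $X^n-1$; multiplying $\alpha\beta=0$ by this inverse would force $\beta=0$, a contradiction. Hence some irreducible factor of $X^n-1$ divides $\alpha(X)$. The symmetric argument, now using $\alpha\neq 0$ (which holds because $1\in supp(\alpha)$), shows that $\beta(X)$ cannot be coprime to $X^n-1$ either, so some irreducible factor of $X^n-1$ divides $\beta(X)$. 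I expect the main obstacle to be the third step above, namely justifying rigorously that minimality together with $1\in supp(\beta)$ collapses the coset decomposition so that $\beta$ genuinely lives in $\mathbb{F}[\langle x\rangle]$ and $\beta(X)$ even makes sense; the unit/coprimality argument itself is entirely routine.
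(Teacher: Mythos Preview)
Your proof is correct and follows essentially the same route as the paper. The paper's proof is a one-line appeal to Lemma~\ref{irr}, which is exactly your Step~3 coprimality argument: if $\gcd(\alpha(X),X^n-1)=1$ then $\alpha$ is a unit in $\mathbb{F}[X]/(X^n-1)$, contradicting $\alpha\beta=0$ with $\beta\neq 0$; and symmetrically for $\beta$. The reduction of $\beta$ to $\mathbb{F}[\langle x\rangle]$, which you single out as the main obstacle and carry out in Steps~1--2 via the right-coset decomposition, is handled in the paper by invoking the external result \cite[Lemma~2.5]{Ab-Ta1} (stated in the preamble to Section~\ref{S-2}); your argument is precisely a self-contained proof of that lemma in the present cyclic setting, so there is no genuine divergence. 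One cosmetic remark: the parenthetical ``(as $\langle x\rangle$ is abelian)'' is unnecessary, since $\alpha(\beta^{(j_0)}x^{-k_0})=(\alpha\beta^{(j_0)})x^{-k_0}=0$ uses only associativity.
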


\begin{thm}\label{T-8}
Let $G$ be a group, $x\in G$, $n:=o(x)>2$, $\alpha\in \{1+x+x^{-1},1+x+x^2\}\subset \mathbb{F}_2[G]$ and $\alpha\beta=0$ for some non-zero $\beta\in \mathbb{F}_2[G]$ where $1\in supp(\beta)$ and $|supp(\beta)|$ is minimum with respect to the property $\alpha\beta=0$. Then $\beta=\beta'$ or $\beta=\beta'x^{-1}$ where $\beta':=\sum_{i=0}^{n-2}{x^{s_i}}$ such that $s_0=0$, $s_1=1$ and $s_i=s_{i-2}+3$, for all $i\in \{2,3,\dots,k\}$.
\end{thm}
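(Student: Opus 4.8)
The plan is to reduce, in two steps, to an elementary recurrence over $\mathbb{F}_2$. First, for the choice $\alpha=1+x+x^{-1}$ I would use the identity $1+x+x^{-1}=x^{-1}(1+x+x^2)$: since $x^{-1}$ is a unit, the two admissible $\alpha$'s have exactly the same annihilators, so it suffices to treat $\alpha=1+x+x^2$. Second, to force $\beta$ into $\langle x\rangle$, set $H:=\langle x\rangle$ and decompose $\beta=\sum_{t}\beta_t$ according to the right cosets $Ht$ of $H$ in $G$, with $\beta_t$ the part of $\beta$ supported on $Ht$. Because $\alpha\in\mathbb{F}_2[H]$ we have $\alpha\beta_t\in\mathbb{F}_2[Ht]$, so the cosets do not interact and $\alpha\beta=0$ forces $\alpha\beta_t=0$ for every $t$; each $\gamma_t:=\beta_t t^{-1}\in\mathbb{F}_2[H]$ is then an annihilator of $\alpha$ of the same weight as $\beta_t$. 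If two distinct cosets contributed, the weight of $\beta$ would be at least twice the minimal annihilator weight in $\mathbb{F}_2[H]$, contradicting minimality of $|supp(\beta)|$; hence $\beta$ lives on a single coset, and the normalization $1\in supp(\beta)$ pins that coset to $H$ itself. Thus $\beta\in\mathbb{F}_2[\langle x\rangle]$.

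Working now in $H\cong\mathbb{Z}/n\mathbb{Z}$, I would write $\beta=\sum_{i=0}^{n-1}b_i x^i$ with $b_i\in\mathbb{F}_2$ and expand
\[ \alpha\beta=(1+x+x^2)\beta=\sum_{k}(b_k+b_{k-1}+b_{k-2})\,x^k, \]
with indices read modulo $n$. Hence $\alpha\beta=0$ is equivalent to the binary Fibonacci recurrence $b_k=b_{k-1}+b_{k-2}$ for all $k\in\mathbb{Z}/n\mathbb{Z}$. Its companion matrix has characteristic polynomial $X^2+X+1$ and therefore order $3$, so every solution is determined by $(b_0,b_1)$ and is periodic of period dividing $3$. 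The cyclic consistency $b_{k+n}=b_k$ thus admits a nonzero solution precisely when $3\mid n$ (recovering the divisibility already implied by Proposition \ref{c-1}); in that case the solution space is two-dimensional, consisting of the zero sequence together with the three nonzero periodic patterns coming from $(b_0,b_1)\in\{(1,1),(1,0),(0,1)\}$, namely the repetitions of $(1,1,0)$, $(1,0,1)$ and $(0,1,1)$. Each nonzero pattern carries exactly two ones per period, so all three annihilators share the common, hence minimal, weight $\tfrac{2n}{3}$.

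It remains to impose $1\in supp(\beta)$, i.e. $b_0=1$, which discards $(0,1,1)$ and leaves two candidates. The pattern $(1,1,0)$ has nonzero positions at the indices $\equiv 0,1\pmod 3$, and this is exactly $\beta'=\sum_i x^{s_i}$, since the recursion $s_0=0,\ s_1=1,\ s_i=s_{i-2}+3$ enumerates precisely the exponents congruent to $0$ or $1$ modulo $3$. The pattern $(1,0,1)$ has nonzero positions $\equiv 0,2\pmod 3$; multiplying a $(1,1,0)$-solution by $x^{-1}=x^{n-1}$ shifts every exponent down by one, sending residues $\{0,1\}$ to $\{n-1,0\}\equiv\{2,0\}\pmod 3$ (using $3\mid n$), which is exactly $(1,0,1)$. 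Hence the second candidate is $\beta'x^{-1}$, giving $\beta\in\{\beta',\beta'x^{-1}\}$, and the case $\alpha=1+x+x^{-1}$ follows from the first paragraph.

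The step I expect to be most delicate is the coset reduction: one must argue carefully that minimality of $|supp(\beta)|$ genuinely forces support in a single coset, and that $1\in supp(\beta)$ then forces that coset to equal $\langle x\rangle$, for it is here alone that both hypotheses on $\beta$ are actually used. Once $\beta$ is confined to $\langle x\rangle$, the remainder is the routine analysis of a period-$3$ binary linear recurrence.
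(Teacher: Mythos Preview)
Your argument is correct and takes a genuinely different route from the paper. The paper first cites an external lemma to force $supp(\beta)\subseteq\langle x\rangle$ (your coset decomposition reproves this cleanly), but then proceeds combinatorially: it encodes $supp(\beta)$ as a subset $A\subseteq\mathbb{Z}_n$ and, through a chain of lemmas (Lemmas~\ref{T-1}--\ref{T-5}, Corollary~\ref{T-6}, Theorem~\ref{T-7}), analyses the condition ``every element of $A\cup A^{+}\cup A^{2+}$ has multiplicity exactly $2$'' by hand, pinning down the gaps $t_i-t_{i-1}\in\{1,2\}$ and proving inductively that they alternate. You bypass this entire chain by reading $(1+x+x^2)\beta=0$ as the $\mathbb{F}_2$-linear recurrence $b_k=b_{k-1}+b_{k-2}$, observing that its companion matrix has order $3$, and enumerating the period-$3$ patterns. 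This is shorter and conceptually cleaner for this particular $\alpha$, and it delivers $3\mid n$ and $|supp(\beta)|=2n/3$ (the content of Theorem~\ref{T-10}) as immediate by-products. The paper's approach, on the other hand, isolates purely combinatorial statements about multisets $A\cup A^{+}\cup A^{2+}$ that stand on their own; its machinery is heavier here but is phrased in a way that might adapt to other small-support questions where no clean recurrence is available. Your concern about the coset step is unwarranted: if two cosets contributed, any single coset-piece would already be a nonzero annihilator of strictly smaller support, contradicting minimality, so the reduction is routine.
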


\begin{thm}\label{T-10}
Let $G$ be a group, $x\in G$, $n:=o(x)>2$, $\alpha\in \{1+x+x^{-1},1+x+x^2\} \subset \mathbb{F}_2[G]$ and $\alpha\beta=0$ for some non-zero $\beta\in \mathbb{F}_2[G]$ where $1\in supp(\beta)$ and $|supp(\beta)|$ is minimum with respect to the property $\alpha\beta=0$. Then $n$ must be a multiple of $3$ and $|supp(\beta)|=2n/3$.
\end{thm}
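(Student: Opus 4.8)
The plan is to translate the problem into the polynomial ring $\mathbb{F}_2[X]/(X^n-1)$, where the annihilator of $\alpha$ turns out to be a very small cyclic code whose nonzero words I can list explicitly. First I would reduce the two possibilities for $\alpha$ to one: since $x\cdot(1+x+x^{-1})=1+x+x^2$ and $x$ is invertible in $\mathbb{F}_2[G]$, the elements $1+x+x^{-1}$ and $1+x+x^2$ are associates and therefore have identical annihilators, so it suffices to treat $\alpha=1+x+x^2$. Working in the subalgebra $\mathbb{F}_2[\langle x\rangle]\cong \mathbb{F}_2[X]/(X^n-1)$, I identify $\alpha$ with the polynomial $p(X)=X^2+X+1$, which is irreducible over $\mathbb{F}_2$.

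To obtain $3\mid n$ I would use that a nonzero $\beta$ exists, so $\alpha$ is a zero divisor; Proposition~\ref{c-1} then forces some irreducible factor of $X^n-1$ to divide $p(X)$, and as $p$ is itself irreducible this factor must be $p$. Hence $X^2+X+1\mid X^n-1$, and since the roots of $X^2+X+1$ are the primitive cube roots of unity, this occurs exactly when $3\mid n$. This settles the first assertion.

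For the support size I would identify the annihilator ideal precisely. Writing $X^n-1=p(X)^e\,r(X)$ with $\gcd(p,r)=1$ and $e\ge1$, the condition $X^n-1\mid p\beta$ is equivalent to $g\mid\beta$, where $g(X):=(X^n-1)/p(X)$; thus $\mathrm{Ann}(\alpha)$ is the cyclic code generated by $g$, of dimension $n-\deg g=2$ over $\mathbb{F}_2$, so it has exactly three nonzero elements. Using $X^2+X+1=(X^3-1)/(X+1)$ together with $3\mid n$, I compute
\[
g(X)=\frac{(X^n-1)(X+1)}{X^3-1}=(X+1)\sum_{j=0}^{n/3-1}X^{3j}=\sum_{j=0}^{n/3-1}\bigl(X^{3j}+X^{3j+1}\bigr),
\]
so $g$ has $2n/3$ pairwise distinct exponents, namely those lying in the classes $0$ and $1$ modulo $3$. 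The other two nonzero codewords $Xg$ and $(1+X)g=(1+X^2)\sum_{j}X^{3j}$ arise the same way and likewise have exactly $2n/3$ distinct exponents (the classes $\{1,2\}$ and $\{0,2\}$ modulo $3$). Hence every nonzero element of $\mathrm{Ann}(\alpha)$, and in particular the minimal-support $\beta$ furnished by Theorem~\ref{T-8}, has support size exactly $2n/3$.

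The step carrying the real content is the explicit evaluation of $g$ and the verification that, for each of the three nonzero codewords, reduction of the exponents modulo $n$ produces no collisions and hence no $\mathbb{F}_2$-cancellation; this is precisely where the hypothesis $3\mid n$ is indispensable, since it is what makes each relevant residue class modulo $3$ contribute exactly $n/3$ distinct residues modulo $n$. The point worth double-checking is that $\dim_{\mathbb{F}_2}\mathrm{Ann}(\alpha)=2$ regardless of the exact multiplicity $e$ of $X^2+X+1$ in $X^n-1$, which is what guarantees that the weight computation for the three listed words is exhaustive.
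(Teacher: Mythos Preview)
Your approach is correct and genuinely different from the paper's. The paper argues combinatorially via Lemma~\ref{T-9}: writing $A=supp(\beta)\subseteq\mathbb{Z}_n$, the relation $(1+x+x^{-1})\beta=0$ over $\mathbb{F}_2$ forces every element of the multiset $A\sqcup A^{+}\sqcup A^{-}$ to occur with multiplicity exactly $2$; an inclusion--exclusion count then gives $\mathbb{Z}_n=A\cup A^{+}=A\cup A^{-}=A^{+}\cup A^{-}$, hence $3\mid n$ and $|A|=2n/3$. Your route instead realises the annihilator of $\alpha$ inside $\mathbb{F}_2[X]/(X^n-1)$ as the length-$n$ cyclic code with generator $g=(X^n-1)/(X^2+X+1)$, notes that it has $\mathbb{F}_2$-dimension $2$, and verifies directly that each of its three nonzero words has weight $2n/3$. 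Your argument is more structural and yields the explicit list of annihilators for free (so Theorem~\ref{T-8} drops out as a by-product rather than an input), whereas the paper's is more elementary and self-contained, avoiding any appeal to factorisation or cyclic-code dimension formulas.

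One point you should make explicit: before working entirely in $\mathbb{F}_2[\langle x\rangle]$ you need $supp(\beta)\subseteq\langle x\rangle$, which is not automatic for general $G$. The paper invokes \cite[Lemma~2.5]{Ab-Ta1} (the right-coset decomposition argument) together with the minimality of $|supp(\beta)|$ to secure this; your annihilator computation tacitly relies on it. Also, the closing appeal to Theorem~\ref{T-8} is unnecessary---once you have shown that all three nonzero codewords have weight $2n/3$, the conclusion for the minimal $\beta$ is immediate.
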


\begin{thm}\label{T-11}
Let $G$ be a group, $x\in G$, $n:=o(x)>2$, $\alpha\in \{1+x+x^3,1+x^2+x^3\}\subset \mathbb{F}_2[G]$ be a zero divisor. Then $n$ must be a multiple of $7$.
\end{thm}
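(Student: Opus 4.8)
The plan is to reduce everything to the polynomial ring $\mathbb{F}_2[X]/(X^n-1)$ and then exploit the fact that both candidate trinomials are irreducible over $\mathbb{F}_2$. Since $supp(\alpha)\subseteq\langle x\rangle$ and $o(x)=n$, I would work inside $\mathbb{F}_2[\langle x\rangle]\cong \mathbb{F}_2[X]/(X^n-1)$ via the identification $x\mapsto X$. Because $\alpha$ is a zero divisor, there is a nonzero $\gamma$ with $\alpha\gamma=0$; translating $\gamma$ by a suitable element of $\langle x\rangle$ so that $1\in supp(\gamma)$ and then passing to one of minimal support, I may assume the full hypotheses of Proposition \ref{c-1} hold. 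That proposition then supplies an irreducible factor $p(X)$ of $X^n-1$ in $\mathbb{F}_2[X]$ with $p(X)\mid \alpha(X)$.

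The crucial observation is that $\alpha(X)\in\{1+X+X^3,\,1+X^2+X^3\}$ is irreducible over $\mathbb{F}_2$: neither $0$ nor $1$ is a root (each evaluates to $1$), so a cubic with no linear factor must be irreducible. Indeed these are precisely the two irreducible cubics over $\mathbb{F}_2$. Hence the irreducible $p(X)$ dividing $\alpha(X)$ must coincide with $\alpha(X)$ itself (both being monic), which upgrades the weak conclusion of Proposition \ref{c-1} to the full divisibility $\alpha(X)\mid X^n-1$.

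Finally I would pass to the splitting field. A root $\theta$ of $\alpha(X)$ lies in $\mathbb{F}_{2^3}=\mathbb{F}_8$, and since $\theta\notin\mathbb{F}_2$ it is neither $0$ nor $1$; as $\mathbb{F}_8^{\times}$ is cyclic of prime order $7$, the multiplicative order of $\theta$ is exactly $7$. From $\alpha(X)\mid X^n-1$ I obtain $\theta^n=1$, which forces $7=\mathrm{ord}(\theta)\mid n$, as desired. The argument applies verbatim to both trinomials, since each is a primitive polynomial of degree $3$ whose roots generate $\mathbb{F}_8^{\times}$.

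The proof is short once Proposition \ref{c-1} is available, so I do not expect a genuine obstacle. The only points needing care are the reduction to the cyclic case (choosing a minimal-support translate of the annihilator) and the step upgrading ``some irreducible factor of $X^n-1$ divides $\alpha$'' to the full statement $\alpha\mid X^n-1$; both are handled by the irreducibility of $\alpha(X)$. The essential content is simply the recognition that $1+X+X^3$ and $1+X^2+X^3$ are exactly the primitive cubics over $\mathbb{F}_2$, whose roots have multiplicative order $7$ in $\mathbb{F}_8^{\times}\cong\mathbb{Z}/7\mathbb{Z}$.
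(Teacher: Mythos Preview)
Your proof is correct and reaches the same conclusion, but the final step differs from the paper's. Both arguments first reduce to $\mathbb{F}_2[\langle x\rangle]$, observe that $\alpha(X)$ is irreducible of degree $3$, and deduce $\alpha(X)\mid X^n-1$. From there the paper proceeds by noting that the reciprocal polynomial $\alpha^{*}(X)=X^3\alpha(1/X)$ also divides $X^n-1$ (since $X^n-1$ is, up to sign, its own reciprocal), and since $\alpha,\alpha^{*},X-1$ are pairwise coprime with product $X^7-1$, one gets $X^7-1\mid X^n-1$ and hence $7\mid n$. You instead pass to a root $\theta\in\mathbb{F}_8$ and use that $\mathbb{F}_8^{\times}$ has prime order $7$, so $\theta^n=1$ forces $7\mid n$. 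Your route is a bit more conceptual and in fact invokes exactly the notion of the \emph{order} of an irreducible polynomial that the paper defines earlier in Section~\ref{S-2}; the paper's route is more hands-on, exhibiting the explicit factorization $X^7-1=(X-1)(1+X+X^3)(1+X^2+X^3)$. Either way the two trinomials are handled simultaneously, since they are reciprocals of one another.
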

\section{\bf Zero divisors with odd support sizes}\label{S-1}
Throughout this paper, let $\mathbb{F}_q$ denote the finite field of size $q$. The following result is well-known, we mention its proof for the reader's convenience. 
\begin{prop}\label{p-group}
Let $G$ be a finite $p$-group and $\mathbb{F}$ be a field of characteristic $p$. If $\alpha:=\sum_{g\in G} \alpha_g g$ in $\mathbb{F}[G]$ is such that $\sum_{g\in G} \alpha_g \not=0$, then $\alpha$ is invertible. In particular, $\alpha$ is not a zero divisor of  $\mathbb{F}[G]$.
\end{prop}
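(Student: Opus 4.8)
The plan is to prove the stronger assertion that $\alpha$ is in fact a \emph{unit} of $\mathbb{F}[G]$; the ``in particular'' statement then comes for free, since if $u$ is a unit and $u\beta=0$ then $\beta=u^{-1}(u\beta)=0$. To exploit the hypothesis $\sum_g\alpha_g\neq 0$, I would introduce the augmentation homomorphism $\epsilon\colon\mathbb{F}[G]\to\mathbb{F}$, $\epsilon\bigl(\sum_g\alpha_g g\bigr)=\sum_g\alpha_g$, which is a surjective $\mathbb{F}$-algebra map with kernel the augmentation ideal $I$. Setting $c:=\epsilon(\alpha)\in\mathbb{F}^{\times}$, the element $\eta:=1-c^{-1}\alpha$ satisfies $\epsilon(\eta)=1-c^{-1}c=0$, so $\eta\in I$ and $\alpha=c(1-\eta)$. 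Thus it suffices to show that $1-\eta$ is invertible for every $\eta\in I$.

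The heart of the matter is the claim that $I$ is \emph{nilpotent}. Indeed, if $I^{k}=0$ then $\eta^{k}=0$, and the finite geometric series gives $(1-\eta)(1+\eta+\cdots+\eta^{k-1})=1-\eta^{k}=1$, so $1-\eta$ is invertible and hence so is $\alpha=c(1-\eta)$. I would establish nilpotency of $I$ by induction on $|G|$. The base case $|G|=1$ is trivial, as $I=0$. For the inductive step, a nontrivial finite $p$-group has nontrivial center, so I can choose a central element $z$ of order $p$. This is exactly where the characteristic hypothesis is used: because $z$ is central (so $z$ and $1$ generate a commutative subalgebra) and $\mathbb{F}$ has characteristic $p$, the middle binomial coefficients vanish and $(z-1)^{p}=z^{p}-1=0$.

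I would then pass to $N:=\langle z\rangle$ and consider the natural surjection $\pi\colon\mathbb{F}[G]\to\mathbb{F}[G/N]$. Using centrality of $z$, its kernel is the two-sided principal ideal $J=(z-1)\mathbb{F}[G]$, and $(z-1)^{p}=0$ gives $J^{p}=(z-1)^{p}\mathbb{F}[G]=0$, so $J$ is nilpotent. Since $\epsilon_{G/N}\circ\pi=\epsilon_{G}$, the map $\pi$ carries $I$ onto the augmentation ideal of $\mathbb{F}[G/N]$, which is nilpotent by the induction hypothesis; hence $\pi(I)^{m}=\pi(I^{m})=0$ for some $m$, i.e.\ $I^{m}\subseteq\ker\pi=J$. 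Therefore $I^{mp}=(I^{m})^{p}\subseteq J^{p}=0$, completing the induction and proving $I$ nilpotent.

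The one step I would flag as requiring genuine care—the main obstacle—is this reduction through the central subgroup $N$: one must verify that $\ker\pi$ is precisely the nilpotent ideal generated by $z-1$ and that $\pi$ maps the augmentation ideal of $G$ onto that of $G/N$, so that the induction hypothesis is applicable. Both facts follow cleanly from the centrality of $z$ and the compatibility $\epsilon_{G/N}\circ\pi=\epsilon_{G}$ of the augmentation maps. Everything else is formal: once $I$ is known to be nilpotent, the geometric-series inverse finishes the argument and yields that $\alpha$ is a unit, hence not a zero divisor.
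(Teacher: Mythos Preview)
Your proof is correct and follows essentially the same route as the paper: write $\alpha$ as a nonzero scalar times $1$ minus an element of the augmentation ideal, invoke nilpotency of the augmentation ideal, and invert via the finite geometric series. The only difference is that the paper cites the nilpotency of $I(G)$ as a standard fact (Robinson's textbook), whereas you supply a self-contained inductive proof of it through a central subgroup of order $p$.
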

\begin{proof}
 Let $\lambda := \sum_{g\in G} \alpha_g$. Then $\iota=\lambda \cdot 1_G -\alpha \in I(G)$, where $I(G)$ is the augmentation ideal of $\mathbb{F}[G]$.  
 Since $I(G)$ is nilpotent (see e.g. \cite[Exercise 6 (b), page 226]{robinson}), $\iota^m=0$ for some positive integer $m$ and so $(1-\lambda^{-1} \iota)\big(\sum_{i=0}^{m-1} (\lambda^{-1} \iota)^i\big)=1$. Thus $\alpha=\lambda (1-\lambda^{-1} \iota)$ is invertible.  This completes the proof. 
\end{proof}
\begin{thm}\label{resi-2-group}
Let $G$ be a residually finite $2$-group. Then $\mathbb{F}_2[G]$ has no zero divisor whose support size is odd.
\end{thm}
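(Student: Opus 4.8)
The plan is to argue by contradiction and to push the hypothetical zero divisor down to a finite $2$-group quotient, where Proposition \ref{p-group} applies directly. Suppose $\alpha\in\mathbb{F}_2[G]$ has odd support size and $\alpha\beta=0$ for some non-zero $\beta\in\mathbb{F}_2[G]$. The key initial observation is that over $\mathbb{F}_2$ every non-zero coefficient equals $1$, so the augmentation $\varepsilon(\alpha):=\sum_{g\in G}\alpha_g$ is simply $|supp(\alpha)|$ reduced modulo $2$; hence the oddness hypothesis gives $\varepsilon(\alpha)=1\neq 0$. This is exactly the feature I want to carry into a finite quotient, since it is what triggers Proposition \ref{p-group}.

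First I would use residual finiteness to manufacture the right quotient. The set $supp(\beta)$ is finite, so for every pair of distinct elements $s,t\in supp(\beta)$ the element $s^{-1}t\neq 1$ lies outside some normal subgroup $N_{s,t}\trianglelefteq G$ with $G/N_{s,t}$ a finite $2$-group. Taking $N$ to be the intersection of these finitely many $N_{s,t}$, the quotient $G/N$ embeds in the finite product $\prod_{s,t} G/N_{s,t}$ of finite $2$-groups and is therefore itself a finite $2$-group, while the projection $\pi:G\to G/N$ is injective on $supp(\beta)$. Consequently the induced algebra homomorphism $\mathbb{F}_2[G]\to\mathbb{F}_2[G/N]$, $\alpha\mapsto\bar\alpha$, sends $\beta$ to a non-zero element $\bar\beta$, because no two of its support terms get merged.

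The crucial point is that this quotient map preserves augmentation: collapsing support terms only adds their coefficients, so $\varepsilon(\bar\alpha)=\varepsilon(\alpha)=1$, even though the support of $\bar\alpha$ may shrink and change parity. Since $G/N$ is a finite $2$-group and $\mathbb{F}_2$ has characteristic $2$, Proposition \ref{p-group} shows that $\bar\alpha$, having non-zero augmentation, is invertible in $\mathbb{F}_2[G/N]$. Applying the quotient map to $\alpha\beta=0$ gives $\bar\alpha\,\bar\beta=0$, and multiplying on the left by $\bar\alpha^{-1}$ forces $\bar\beta=0$, contradicting the previous paragraph. This contradiction completes the argument.

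The only genuinely delicate step is the choice of $N$: I must guarantee simultaneously that $G/N$ stays a $2$-group and that $\bar\beta$ survives. Residual finiteness as a $2$-group supplies exactly the separation of the finitely many support elements of $\beta$ needed for the latter, while closure of finite $2$-groups under finite products and under passage to subgroups yields the former. Note that I never need to separate $supp(\alpha)$, since the invertibility of $\bar\alpha$ rests only on its augmentation, which is automatically preserved; everything else is formal once the augmentation is seen to be both non-zero (from the odd support together with the role of characteristic $2$) and invariant under the quotient.
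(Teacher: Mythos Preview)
Your proof is correct and follows essentially the same strategy as the paper: push the relation $\alpha\beta=0$ to a finite $2$-group quotient via residual finiteness and then invoke Proposition~\ref{p-group}. The paper separates both $supp(\alpha)$ and $supp(\beta)$ in the quotient, whereas you observe (correctly) that only $supp(\beta)$ need be kept injective, since augmentation is preserved by any quotient map---a small but pleasant economy.
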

\begin{proof}
Suppose, for a contradiction, that $\alpha \beta=0$ for some non-zero $\alpha,\beta \in \mathbb{F}_2[G]$ such that $|supp(\alpha)|$ is odd. Let $A:=\{x^{-1}y \vert  x,y\in supp(\alpha) \text{\ or \ } x,y\in supp(\beta) \text{\ and\ } x\not=y\}$. So, there exists a normal subgroup $N$ of $G$ such that $A \cap N=\varnothing$ and $\frac{G}{N}$ is a finite $2$-group. It follows that $\overline{\alpha}\overline{\beta}=\overline{0}$, where $~^{\overline{~}}:\mathbb{F}_2[G]\rightarrow \mathbb{F}_2[\frac{G}{N}]$ is the natural ring epimorphism such that $\overline{x}=xN$ for all $x\in G$.  Since $A \cap N=\varnothing$, $|supp(\beta)|=|supp(\overline{\beta})|$ and $|supp(\alpha)|=|supp(\overline{\alpha})|$. The latter contradicts Proposition \ref{p-group}. This completes the proof. 
\end{proof}

\begin{proof}[Proof of Theorem \ref{Q}]
We may assume $\alpha$ is a zero divisor in $\mathbb{Z}[G]$ and  $\gcd (\alpha_1,\alpha_2,\alpha_3)=1$. Suppose, for a contradiction, that $p\mid\alpha_1$ but $p\nmid\alpha_2$ or $p\nmid\alpha_3$ for some prime  $p$. So $\alpha'=\alpha_2h_2+\alpha_3h_3$ is a zero divisor with the support size $1$ or $2$ in the group algebra of $G$ over the finite field of size $p$, a contradiction, since for any  field $\mathbb{F}$, $\mathbb{F}[G]$ does not contain a zero divisor whose support is of size at most $2$ (see \cite[Theorem 2.1]{pascal}). Therefore $|\alpha_1|=|\alpha_2|=|\alpha_3|$. This completes the proof.
\end{proof}
\begin{proof}[Proof of Theorem \ref{resi-p-group}]
If $p=2$ then by Theorem \ref{resi-2-group} and \cite[Theorem 2.1 and Lemma 2.2]{pascal}, the statement is obviously true. Let $G$ be a residually finite $p$-group for some prime number $p\not=2$ and $\alpha=\alpha_1h_1+\alpha_2h_2+\alpha_3h_3$ be a zero divisor with the support size $3$ in $\mathbb{Q}[G]$. Then by Theorem \ref{Q}, there exists a zero divisor of the form $h_1+h_2+h_3$ or $h_1-h_2+h_3$ in $\mathbb{Q}[G]$. So, there exists a zero divisor of the form $h_1+h_2+h_3$ or $h_1-h_2+h_3$ in $\mathbb{F}_p[G]$. Therefore, if $p>3$ then there is a contradiction by Proposition \ref{p-group} and so $\mathbb{Q}[G]$ has no zero divisor whose support size is $3$. On the other hand, if $p=3$ then by Proposition \ref{p-group}, $h_1-h_2+h_3$ can not be a zero divisor in $\mathbb{F}_p[G]$ and so there exist no zero divisor of the form $h_1-h_2+h_3$ in $\mathbb{Q}[G]$. This completes the proof. 
\end{proof}
%
%
\section{\bf Zero divisors of the form $1+x^i+x^j$ in group algebras over $\mathbb{F}_2$}\label{S-2}

Let $G$ be a group, $x\in G$, $n:=o(x)>2$.  Suppose that $\alpha\in \mathbb{F}_2[G]$, $1\in supp(\alpha)\subseteq \langle x\rangle$ is a zero divisor so that $\alpha\beta=0$ for some non-zero $\beta\in \mathbb{F}_2[G]$. We may assume that  $1\in supp(\beta)$ and by \cite[Lemma 2.5]{Ab-Ta1}, if we choose $\beta$ of minimum support size with respect to the property $\alpha \beta=0$, then $ supp(\beta) \subseteq \langle supp(\alpha)\rangle\subseteq \langle x\rangle$. 
  In this section, we want to study  possible values of $n$ with the following property:  There exist distinct $i,j\in\{1,2,\dots,n-1\}$ such that 
$\alpha=1+x^i+x^j$.

Let $\mathbb{F}[X]$ denote the polynomial ring in the indeterminate $X$. In the following we use the ring isomorphism $\mathbb{F}[\langle x \rangle]\cong \frac{\mathbb{F}[X]}{\langle X^n-1\rangle}$, where $\langle X^n-1\rangle$ is the ideal generated by $X^n-1$ in $\mathbb{F}[X]$. 
Actually the map $x\mapsto X$ can be extended to an epimorphism from $\mathbb{F}[\langle x \rangle]$ to $\mathbb{F}[X]$ whose kernel is $\langle X^n-1\rangle$. We denote by $\alpha(X)$ the image of $\alpha\in \mathbb{F}[\langle x \rangle]$ under the latter ring epimorphism. Note that $\alpha \beta=0$ in $\mathbb{F}[\langle x \rangle]$ is equivalent to $\alpha(X) \beta(X) \in \langle X^n-1 \rangle$.
\begin{lem}\label{irr}
Let $\alpha(X)\in \mathbb{F}[X]$ and $\alpha(X)\not\in\langle X^n-1\rangle$ for some positive integer $n$. Then, $\alpha(X)\beta(X)\in \langle X^n-1\rangle$ for some $\beta(X)\in \mathbb{F}[X]$ such that $\beta(X)\not\in\langle X^n-1\rangle$ if and only if there exists at least one irreducible factor of $X^n-1$ in $\mathbb{F}[X]$ which divides $\alpha(X)$.
\end{lem}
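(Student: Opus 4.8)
The plan is to recognize this as a statement about zero divisors in the quotient ring $R:=\mathbb{F}[X]/\langle X^n-1\rangle$. Writing $\bar{\alpha}$ for the class of $\alpha(X)$, the hypothesis ``$\alpha(X)\beta(X)\in\langle X^n-1\rangle$ for some $\beta(X)\notin\langle X^n-1\rangle$'' says precisely that $\bar{\alpha}$ is a nonzero zero divisor in $R$, while the conclusion ``some irreducible factor of $X^n-1$ divides $\alpha(X)$'' says precisely that $\gcd\bigl(\alpha(X),X^n-1\bigr)\neq 1$. So I would prove the equivalent reformulation: $\bar{\alpha}$ is a zero divisor in $R$ if and only if $\alpha(X)$ and $X^n-1$ fail to be coprime. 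Throughout I would use only that $\mathbb{F}[X]$ is a principal ideal domain (so greatest common divisors and B\'ezout identities are available) together with unique factorization into irreducibles.

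For the direction ($\Leftarrow$), suppose an irreducible $p(X)$ divides both $\alpha(X)$ and $X^n-1$. I would write $X^n-1=p(X)^{e}m(X)$ with $\gcd(p,m)=1$ and $e\geq 1$, and set $\beta(X):=(X^n-1)/p(X)=p(X)^{e-1}m(X)$. Since $p(X)\mid\alpha(X)$, the product $\alpha(X)\beta(X)$ is divisible by $p(X)\cdot(X^n-1)/p(X)=X^n-1$, so $\alpha(X)\beta(X)\in\langle X^n-1\rangle$. It remains to check $\beta(X)\notin\langle X^n-1\rangle$, which is immediate: $\beta(X)$ is a nonzero polynomial of degree $n-\deg p(X)<n$, hence cannot be a multiple of $X^n-1$. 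Thus $\beta(X)$ is the desired witness.

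For the direction ($\Rightarrow$), I would argue by contraposition. Assuming $\gcd\bigl(\alpha(X),X^n-1\bigr)=1$, B\'ezout yields $u(X)\alpha(X)+v(X)(X^n-1)=1$ for some $u,v\in\mathbb{F}[X]$, so $\bar{\alpha}$ is a unit in $R$. Then $\alpha(X)\beta(X)\in\langle X^n-1\rangle$, i.e.\ $\bar{\alpha}\,\bar{\beta}=0$ in $R$, forces $\bar{\beta}=0$ upon multiplying by the inverse of $\bar{\alpha}$, that is $\beta(X)\in\langle X^n-1\rangle$. Hence no witness $\beta$ outside the ideal can exist, which is exactly the contrapositive of the claimed implication.

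I do not expect a serious obstacle; the only point demanding care is the bookkeeping of multiplicities in the ($\Leftarrow$) construction. When $\operatorname{char}\mathbb{F}\mid n$ the polynomial $X^n-1$ need not be squarefree, so one must divide by a single copy of $p(X)$ rather than by $p(X)^{e}$, both to keep $X^n-1\mid\alpha\beta$ and to guarantee $\deg\beta<n$. Taking the witness to be $(X^n-1)/p(X)$ makes this automatic, and the coprimality/B\'ezout argument for the converse is entirely standard.
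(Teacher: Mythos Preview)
Your proposal is correct and takes essentially the same approach as the paper: for ($\Leftarrow$) both you and the paper take the witness $\beta(X)=(X^n-1)/p(X)$ for an irreducible common factor $p(X)$, and for ($\Rightarrow$) the paper's terse ``it follows that $\beta(X)\in\langle X^n-1\rangle$'' is precisely your coprimality argument, phrased via unique factorization rather than B\'ezout. Your extra care about multiplicities when $\operatorname{char}\mathbb{F}\mid n$ is a welcome clarification but not a departure in method.
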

\begin{proof}
Suppose, for a contradiction, that $f(X)\nmid \alpha(X)$ for each irreducible polynomial $f(X)\in \mathbb{F}[X]$ which is a factor of $X^n-1$. Since $\alpha(X)\beta(X)\in \langle X^n-1\rangle$, it follows that $\beta(X)\in \langle X^n-1\rangle$,  a contradiction. 

Now suppose that $\alpha(X)=f(X) r(X)$ for some irreducible factor $f(X)$ of $X^n-1$ and some $r(X)\in \mathbb{F}[X]$. Assume that $X^n-1=f(X)\beta(X)$ for some  $\beta(X)\in \mathbb{F}[X]$. It follows that $\beta(X) \not\in \langle X^n-1\rangle$ and  $\alpha(X)\beta(X)\in \langle X^n-1\rangle$. This completes the proof.
\end{proof}

\begin{proof}[Proof of Proposition \ref{c-1}]
It follows from Lemma \ref{irr}.
\end{proof}

\begin{defn}
If $f\in \mathbb{F}_q[X]$ is a polynomial such that $f(0)\not=0$, then the least positive integer $t$ for which  $f$ divides $X^t-1$ is called the order of $f$.
\end{defn}
\begin{lem}\cite[Corollary 3.4]{nied}
 If $f\in \mathbb{F}_q[X]$ is an irreducible polynomial over $\mathbb{F}_q$ of degree  $m$, then the order of $f$ divides $q^m-1$.
 \end{lem}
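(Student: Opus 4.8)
The plan is to pass from the polynomial $f$ to a root of it in a suitable extension field of $\mathbb{F}_q$ and to identify the order of $f$ with the multiplicative order of that root. Since $f$ is irreducible of degree $m$ over $\mathbb{F}_q$, the quotient ring $\mathbb{F}_q[X]/\langle f(X)\rangle$ is a field with $q^m$ elements; I will call it $\mathbb{F}_{q^m}$ and let $\alpha$ denote the image of $X$, so that $\alpha$ is a root of $f$ and $\mathbb{F}_{q^m}=\mathbb{F}_q(\alpha)$. The hypothesis that the order of $f$ is defined forces $f(0)\neq 0$, hence $\alpha\neq 0$, and so $\alpha$ lies in the multiplicative group $\mathbb{F}_{q^m}^{*}$.

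The crucial step is the equivalence, for every positive integer $t$,
\[
f(X)\mid X^t-1 \quad\Longleftrightarrow\quad \alpha^t=1 .
\]
This holds because $f$, being irreducible and vanishing at $\alpha$, is up to a scalar the minimal polynomial of $\alpha$ over $\mathbb{F}_q$; consequently $f$ divides a polynomial $g\in\mathbb{F}_q[X]$ precisely when $g(\alpha)=0$, and applying this to $g(X)=X^t-1$ gives the displayed equivalence. Taking the least positive $t$ on each side shows that the order of $f$ equals the multiplicative order of $\alpha$ in $\mathbb{F}_{q^m}^{*}$.

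Finally I would invoke Lagrange's theorem: the group $\mathbb{F}_{q^m}^{*}$ has order $q^m-1$, so the order of any of its elements, in particular $\alpha$, divides $q^m-1$. Combining this with the identification of the previous step yields that the order of $f$ divides $q^m-1$, as required.

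The argument is short, and the only point that needs care is the minimal-polynomial equivalence: one must check that $f$ generates the kernel of the evaluation homomorphism $\mathbb{F}_q[X]\to\mathbb{F}_{q^m}$, $X\mapsto\alpha$, which is immediate from the irreducibility of $f$ together with $f(\alpha)=0$. I expect no serious obstacle here; the main thing to record explicitly is that $f(0)\neq 0$ guarantees $\alpha$ is a unit, so that its multiplicative order, and hence the order of $f$, is a well-defined divisor of $q^m-1$.
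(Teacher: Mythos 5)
Your proof is correct. The paper offers no proof of this lemma---it is quoted verbatim with a citation to Lidl--Niederreiter---and your argument (identifying the order of $f$ with the multiplicative order of a root $\alpha$ of $f$ in $\mathbb{F}_{q^m}^{*}$ via the minimal-polynomial equivalence, then applying Lagrange's theorem to the group of order $q^m-1$) is precisely the standard proof in that reference, including the correct observation that $f(0)\neq 0$ guarantees $\alpha\neq 0$ so that its multiplicative order is defined.
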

\begin{defn}
For any field $\mathbb{F}$, a polynomial $f$ in $\mathbb{F}[X]$ is called a trinomial whenever $f$ has only three non-zero terms i.e., $f=\alpha_1 X^i+\alpha_2 X^j +\alpha_3 X^k$, where $\alpha_1,\alpha_2,\alpha_3$ are non-zero and $i,j,k$ are pairwise distinct non-negative integers.
\end{defn}  
\begin{rem}\label{r-1}
{\rm
\begin{enumerate}
\item
If $f\in \mathbb{F}_q[X]$ is an irreducible polynomial such that $f(0)\not=0$, then the order of $f$ is $t$ if and only if $f$ divides the cyclotomic polynomial $Q_t(X)=\prod_{d|t}{(1-X^{t/d})^{\mu(d)}}$, where $\mu(d)$ is the M\"obius function. Also, any monic irreducible factor of $Q_t$ has the same degree \cite{nied}. 
\item
Let $t$ be an odd positive integer and $f,g\in \mathbb{F}_2[X]$ be two distinct monic irreducible factors of $Q_t$. Then $f$ divides a trinomial if and only if $g$ divides a trinomial. Furthermore, if an irreducible polynomial in $\mathbb{F}_2[X]$ of order $t$ divides a trinomial, then any irreducible polynomial in $\mathbb{F}_2[X]$ of order $lt$ divides a trinomial, for all odd positive integers $l$ \cite{gol}.
\item
Let $G$ be a group, $x\in G$, $n:=o(x)>2$, $\alpha=1+x^i+x^j\in\mathbb{F}_2[G]$ for some distinct $i,j\in\{1,2,\dots,n-1\}$ and $\alpha\beta=0$ for some non-zero $\beta\in \mathbb{F}_2[G]$, where $1\in supp(\beta)$ and $|supp(\beta)|$ is minimum with respect to the property $ \alpha\beta=0$. Then by Proposition \ref{c-1}, there exists at least one irreducible factor of $X^n+1$ in $\mathbb{F}_2[X]$ which divides the trinomial $\alpha(X)$. If $n=2^kl$ for some odd integer $l>1$ and some positive integer $k$, then by Proposition \ref{c-1}, there exists at least one irreducible factor of $X^{l}+1$ in $\mathbb{F}_2[X]$ which divides the trinomial $\alpha(X)$, since $X^n+1=(X^l+1)^{2^k}$ in $\mathbb{F}_2[X]$. 
\end{enumerate}
}
\end{rem}
\begin{thm}\cite[Theorem 7 (Welch's Criterion)]{gol}
For any odd positive integer $t$, any irreducible polynomial in $\mathbb{F}_2[X]$ of order $t$ divides a trinomial if and only if $gcd(1+X^t,1+(1+X)^t)$ has degree greater than $1$.
\end{thm}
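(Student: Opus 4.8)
The plan is to translate the divisibility condition into the language of roots of unity in $\overline{\mathbb{F}_2}$ and then exploit a symmetry. First I would fix an irreducible polynomial $f\in\mathbb{F}_2[X]$ of order $t$; since $t\geq 1$ we have $f(0)\neq 0$, so dividing out powers of $X$ shows that $f$ divides a trinomial if and only if $f\mid 1+X^a+X^b$ for some integers $0<a<b$. Let $\alpha$ be a root of $f$ in $\overline{\mathbb{F}_2}$. Because the order of $f$ equals the multiplicative order of its roots, $\alpha$ is a primitive $t$-th root of unity, so $\langle\alpha\rangle=\mu_t$, the full group of $t$-th roots of unity, which has exactly $t$ elements since $t$ is odd. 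As $f$ is the minimal polynomial of $\alpha$, we get $f\mid 1+X^a+X^b$ if and only if $1+\alpha^a+\alpha^b=0$, i.e. if and only if $u+v=1$ where $u:=\alpha^a$ and $v:=\alpha^b$ lie in $\mu_t$. One checks that any solution of $u+v=1$ with $u,v\in\mu_t$ automatically satisfies $u\neq v$ and $u,v\neq 1$ (otherwise one of them would equal $0\notin\mu_t$), so it genuinely corresponds to a trinomial $1+X^a+X^b$. Since $\langle\alpha\rangle=\mu_t$ regardless of which order-$t$ irreducible $f$ was picked, the resulting condition---existence of $u,v\in\mu_t$ with $u+v=1$---depends only on $t$; hence either every irreducible of order $t$ divides a trinomial or none does.

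Next I would identify the two root sets. The polynomial $1+X^t$ is separable because $t$ is odd, and its root set is precisely $\mu_t$. The polynomial $1+(1+X)^t$ is obtained from the separable $1+Y^t$ by the invertible affine substitution $Y=1+X$, hence it too is separable, and its root set is $\{w+1 : w\in\mu_t\}$. Consequently $\gcd\big(1+X^t,\,1+(1+X)^t\big)$ is separable and its degree equals the number of common roots in $\overline{\mathbb{F}_2}$; such a common root is exactly an element $z$ with $z\in\mu_t$ and $z+1\in\mu_t$. Setting $u=z$ and $v=z+1$ (so that $u+v=1$) identifies common roots with solutions of $u+v=1$ in $\mu_t$; in particular a common root exists if and only if such $u,v$ exist, which by the previous paragraph is equivalent to $f$ dividing a trinomial.

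Finally, to account for the strict inequality ``$>1$'', I would observe that $z\mapsto z+1$ is a fixed-point-free involution on the set of common roots: if $z\in\mu_t$ and $z+1\in\mu_t$, then $z+1$ and $(z+1)+1=z$ also lie in $\mu_t$, while $z\neq z+1$ in characteristic $2$. Hence the common roots split into $2$-element orbits, so their number, and therefore $\deg\gcd$, is even. Combining the steps, $f$ divides a trinomial $\iff$ a common root exists $\iff \deg\gcd\geq 1 \iff \deg\gcd\geq 2 \iff \deg\gcd>1$, which is the asserted criterion. I expect the only delicate points to be the separability of $1+(1+X)^t$, where the oddness of $t$ is essential, and the parity argument, which is precisely what forces the threshold to be ``greater than $1$'' rather than merely nonconstant; the remainder is the routine dictionary between divisibility by a minimal polynomial and vanishing at its root.
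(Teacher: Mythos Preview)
The paper does not supply its own proof of this statement: it is quoted verbatim as \cite[Theorem 7 (Welch's Criterion)]{gol} and is immediately followed by a corollary, with no argument given. So there is nothing in the present paper to compare your proposal against.

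That said, your argument is correct and complete. The translation ``$f\mid 1+X^a+X^b \iff \exists\,u,v\in\mu_t$ with $u+v=1$'' via a root $\alpha$ of $f$ is exactly the right dictionary, and you correctly observe that this condition depends only on $t$, which is what makes the phrase ``any irreducible polynomial of order $t$'' well posed. The identification of the root sets of $1+X^t$ and $1+(1+X)^t$ with $\mu_t$ and $1+\mu_t$ is standard, and separability (using that $t$ is odd) guarantees that $\deg\gcd$ literally counts common roots. Your final parity step---that $z\mapsto z+1$ is a fixed-point-free involution on the set of common roots, forcing that set to have even size---cleanly explains the threshold ``degree greater than $1$'' rather than merely ``degree at least $1$''. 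This is essentially the argument one finds in Golomb--Lee; your write-up would serve as a self-contained substitute for the citation.
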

\begin{cor}\label{cor1}
There exists a zero divisor $\alpha$ in the group algebra $\mathbb{F}_2[\mathbb{Z}_n]$ such that $|supp(\alpha)|=3$ if and only if there exists an odd positive integer $t$ such that $t\mid n$ and $gcd(1+X^t,1+(1+X)^t)$ has degree greater than $1$ in the ring $\mathbb{F}_2[X]$.
\end{cor}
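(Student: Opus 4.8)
The plan is to reduce the statement to a divisibility question about trinomials and then invoke Welch's Criterion. Throughout I use the isomorphism $\mathbb{F}_2[\mathbb{Z}_n]\cong \mathbb{F}_2[X]/\langle X^n-1\rangle$ and that $X^n-1=X^n+1$ over $\mathbb{F}_2$. First I would normalise: any element of support $3$ has the shape $x^a+x^b+x^c$, and multiplying by the unit $x^{-a}$ (which preserves being a zero divisor) turns it into a trinomial $\alpha=1+x^i+x^j$ with distinct $i,j\in\{1,\dots,n-1\}$. Thus a support-$3$ zero divisor exists in $\mathbb{F}_2[\mathbb{Z}_n]$ if and only if some such $\alpha$ is a zero divisor, which by Lemma \ref{irr} happens if and only if some irreducible factor $f$ of $X^n+1$ divides the trinomial $\alpha(X)=1+X^i+X^j$. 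So the whole corollary becomes: an irreducible factor of $X^n+1$ divides a trinomial of degree $<n$ if and only if the stated gcd condition holds for some odd $t\mid n$.

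For the forward direction, suppose $f$ is an irreducible factor of $X^n+1$ dividing a trinomial $\alpha(X)$. Writing $n=2^km$ with $m$ odd and using $X^n+1=(X^m+1)^{2^k}$ together with $X^m+1=\prod_{d\mid m}Q_d(X)$, I get $f\mid Q_t$ for some odd $t\mid m\mid n$, so $f$ is irreducible of order $t$ by Remark \ref{r-1}(1). Since $f$ divides a trinomial, Welch's Criterion gives that $\gcd(1+X^t,1+(1+X)^t)$ has degree greater than $1$; here Remark \ref{r-1}(2) guarantees that ``divides a trinomial'' is a property shared by all irreducible factors of $Q_t$, so the criterion applies cleanly to the particular $f$ at hand.

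For the backward direction, suppose some odd $t\mid n$ satisfies the gcd condition. By Welch's Criterion every irreducible polynomial of order $t$ divides a trinomial, and such a polynomial exists: any irreducible factor $f$ of $Q_t$ (which is nonempty since $\deg Q_t=\varphi(t)\ge 1$) has order $t$ by Remark \ref{r-1}(1). Moreover $f\mid Q_t\mid X^t-1\mid X^n-1=X^n+1$. It remains to manufacture a trinomial of degree $<n$ divisible by $f$ from the (a priori high-degree) trinomial $\tau(X)=X^a+X^b+X^c$ that $f$ divides. For this I pass to the field $K=\mathbb{F}_2[X]/\langle f\rangle$, in which the image $\theta$ of $X$ has multiplicative order exactly $t$; reducing the exponents modulo $t$ yields $\theta^{a'}+\theta^{b'}+\theta^{c'}=0$ with $a',b',c'\in\{0,\dots,t-1\}$. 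Any coincidence among $a',b',c'$ would cancel two terms and force a single nonzero power of $\theta$ to vanish, which is impossible; hence $a',b',c'$ are distinct and $f$ divides the trinomial $X^{a'}+X^{b'}+X^{c'}$. Factoring out the lowest power of $X$ (permissible because $f(0)\ne 0$) produces $f\mid 1+X^i+X^j$ with $0<i<j<t\le n$, and Lemma \ref{irr} then shows $1+x^i+x^j$ is a zero divisor of support $3$ in $\mathbb{F}_2[\mathbb{Z}_n]$.

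The routine assembling of Lemma \ref{irr}, Remark \ref{r-1} and Welch's Criterion is straightforward; the one genuinely non-formal point---and the step I expect to be the main obstacle---is this exponent-reduction in the backward direction, namely converting ``$f$ divides some trinomial of arbitrary degree'' into ``$f$ divides a trinomial of degree $<n$'' so that it represents an honest support-$3$ element of $\mathbb{F}_2[\mathbb{Z}_n]$. Controlling the possible collisions among the reduced exponents is precisely what the passage to the residue field $K$ is designed to handle.
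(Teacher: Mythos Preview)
Your proposal is correct and follows exactly the route the paper intends: the corollary is stated in the paper without proof, immediately after Welch's Criterion, and is meant to be read off from Lemma~\ref{irr}, Remark~\ref{r-1} and that theorem, which is precisely the assembly you carry out. The exponent-reduction step you flag (passing to $K=\mathbb{F}_2[X]/\langle f\rangle$ to force the reduced exponents $a',b',c'$ to be distinct) is a detail the paper leaves implicit; your treatment of it is sound and completes the argument cleanly.
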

\begin{rem}
{\rm
By Corollary \ref{cor1}, if $\alpha\in \mathbb{F}_2[\mathbb{Z}_n]$, $|supp(\alpha)|=3$ and there exists an irreducible polynomial $f\in \mathbb{F}_2[X]$ of order $t$, for some odd positive integer $t\mid n$, which divides $\alpha(X)$, then $\alpha$ is a zero divisor in the group algebra $\mathbb{F}_2[\mathbb{Z}_n]$.
}
\end{rem}
\begin{exm}
Let $\alpha=x^3+x+1$ be an element of the group algebra $\mathbb{F}_2[\mathbb{Z}_7]$, where $\mathbb{Z}_7=\langle x\rangle$. It is easy to see that $\alpha(X)=X^3+X+1$ is an irreducible polynomial of order $7$ in $\mathbb{F}_2[X]$ which is already a trinomial. So, $\alpha$ is a zero divisor of $\mathbb{F}_2[\mathbb{Z}_7]$. Also,  $\alpha\beta=0$ for $\beta=(x^3+x^2+1)(x+1)\in\mathbb{F}_2[\mathbb{Z}_7]$ because $\alpha(X)\beta(X)=X^7+1$.
\end{exm}
\begin{exm}
Let $\alpha=x^{16}+x+1$ be an element of the group algebra $\mathbb{F}_2[\mathbb{Z}_{85}]$ where $\mathbb{Z}_{85}=\langle x\rangle$. It can be seen that $\alpha(X)=X^{16}+X+1=(X^8+X^6+X^5+X^3+1)(X^8+X^6+X^5+X^4+X^3+X+1)$ and $f(X)=X^8+X^6+X^5+X^4+X^3+X+1$ is an irreducible polynomial of order $85$. So, $\alpha$ is a zero divisor of $\mathbb{F}_2[\mathbb{Z}_{85}]$.
\end{exm}
\begin{rem}
{\rm
\begin{enumerate}
\item
Let $M$ be the set of all positive integers $t$ such that any irreducible polynomial in $\mathbb{F}_2[X]$ of order $t$ divides a trinomial but no irreducible polynomial of order $d$ divides a trinomial, for all $d|t$ such that $d\not=t$. It is easy to see that if $t\in M$ then $t$ is an odd positive integer greater than $1$. Also we note that over any finite field, $t\mid n$ if and only if $(X^t-1)\mid (X^n-1)$, for all positive integers $t$ and $n$. 
\item
Let $G$ be a group, $x\in G$, $n:=o(x)>2$, $\alpha=1+x^i+x^j\in\mathbb{F}[G]$, for some distinct $i,j\in\{1,2,\dots,n-1\}$, and $\alpha\beta=0$ for some non-zero $\beta\in \mathbb{F}_2[G]$ where $1\in supp(\beta)$ and $|supp(\beta)|$ is minimum with respect to the property $ \alpha\beta=0$. Then by part $(1)$ and Remark \ref{r-1}, the set of all possible values of $n$ is the set of all odd multiples of  elements of $M$.
\end{enumerate}
}
\end{rem}
In the following, some results about the elements of $M$ which are obtained in \cite{gol} are given.
\begin{thm}[See \cite{gol}]
Let $M$ be the set of all positive integers $t$ such that any irreducible polynomial in $\mathbb{F}_2[X]$ of order $t$ divides a trinomial but no irreducible polynomial of order $d$ divides a trinomial, for all $d|t$ such that $d\not=t$. Then,
\begin{enumerate}
\item
All  Mersenne primes i.e., prime numbers of the form $2^m-1$ where $m\in \mathbb{N}$, are in $M$.
\item
By computer search among non-Mersenne prime numbers smaller than $3000000$, there exist only five elements  $73,121369, 178481,262657$ and $599479$ in $M$ .
\item
By computer search among non-prime numbers smaller than $1000000$, there exist only ten elements $85,2047,3133,4369,11275,49981,60787,76627,140911$ and $486737$ in $M$.
\item
Eight other larger non-prime elements of $M$ which are currently known are 
$1826203,2304167,$ $2528921,8727391,14709241,15732721,16843009$ and $23828017$.
\end{enumerate}
\end{thm}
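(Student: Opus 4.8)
The statement splits into two quite different parts: item (1) is a genuine theorem admitting a short self-contained argument, whereas items (2)--(4) are assertions of exhaustive computer searches and can only be \emph{certified} computationally rather than proved analytically. The plan is therefore to give a direct proof of (1) from Welch's Criterion together with the Frobenius identity over $\mathbb{F}_2$, and to describe for (2)--(4) the decision procedure by which the tabulated sets are produced. Throughout I would use the two conditions defining $t\in M$: that every irreducible polynomial of order $t$ divides a trinomial, and that no irreducible polynomial of order $d$ does so for any proper divisor $d\mid t$.

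For item (1), write $t=2^m-1$ with $t$ prime. Since $t$ is prime its only proper divisor is $d=1$, and the unique irreducible polynomial of order $1$ over $\mathbb{F}_2$ is $X+1$. Any trinomial evaluated at $X=1$ equals $1+1+1=1\neq 0$, so $X+1$ divides no trinomial; this disposes of the minimality half of the definition of $M$. It then remains to show that every irreducible polynomial of order $t$ divides a trinomial, and here I would invoke Welch's Criterion and compute the relevant gcd in closed form. Using $(1+X)^{2^m}=1+X^{2^m}$ in $\mathbb{F}_2[X]$, one obtains
\[
(1+X)^{t}=\frac{1+X^{2^m}}{1+X}=\sum_{i=0}^{t}X^i,
\]
so that $1+(1+X)^t=\sum_{i=1}^{t}X^i=X\cdot\frac{1+X^t}{1+X}$. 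Because $X\nmid 1+X^t$ and $\gcd(X,1+X)=1$, factoring out the common factor $\frac{1+X^t}{1+X}$ gives
\[
\gcd\!\left(1+X^t,\;1+(1+X)^t\right)=\frac{1+X^t}{1+X},
\]
a polynomial of degree $t-1\geq 2$. By Welch's Criterion every irreducible polynomial of order $t$ then divides a trinomial, and combined with the minimality check this yields $t\in M$.

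For items (2)--(4) the plan is computational. For each candidate $t$ in the stated range I would test, via Welch's Criterion, whether $\gcd(1+X^t,1+(1+X)^t)$ has degree greater than $1$, and then verify the minimality condition that no proper divisor $d\mid t$ already enjoys this property. The search can be trimmed using the reductions recorded in the remarks above: it suffices to examine odd $t$, and once an order-$t$ irreducible divides a trinomial the same holds for every odd multiple of $t$, so only the ``primitive'' values genuinely need to be listed in $M$. Carrying this out reproduces the tables imported from \cite{gol}.

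The only real mathematical obstacle lies in item (1), and it is mild: the crux is recognizing that the Frobenius identity collapses $(1+X)^{2^m-1}$ into the all-ones polynomial $\sum_{i=0}^{t}X^i$, after which the gcd appearing in Welch's Criterion is forced to equal $\frac{1+X^t}{1+X}$ and its degree is immediately seen to exceed $1$. For items (2)--(4) there is no obstacle of proof but only of computation, together with the inherent caveat that an exhaustiveness claim such as ``there exist only five'' or ``only ten'' is exactly as strong as the search that produced it; accordingly these lists are cited from \cite{gol} rather than re-derived here.
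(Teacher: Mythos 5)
Your proposal is correct, and it takes a genuinely different route from the paper for the simple reason that the paper gives no proof at all: this theorem appears there purely as a survey item, flagged ``See \cite{gol}'' and preceded by the sentence that these are results ``obtained in \cite{gol}.'' Your handling of items (2)--(4) therefore coincides in substance with the paper's stance (a citation, plus a description of the Welch-criterion search whose exhaustiveness is exactly as strong as the computation), while your proof of item (1) is a self-contained argument the paper never supplies. That argument checks out: over $\mathbb{F}_2$, $(1+X)^{2^m}=1+X^{2^m}$ gives $(1+X)^t=\sum_{i=0}^{t}X^i$ for $t=2^m-1$, hence $1+(1+X)^t=X\cdot\frac{1+X^t}{1+X}$; since $1+X^t$ has nonzero constant term, $\gcd\bigl(1+X^t,\,1+(1+X)^t\bigr)=\frac{1+X^t}{1+X}$, of degree $t-1\geq 2$ (the smallest Mersenne prime is $3$), and Welch's Criterion applies since $t$ is odd. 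The minimality half is also handled correctly: primality reduces it to $d=1$, and $1+X$ divides no trinomial because every trinomial over $\mathbb{F}_2$ evaluates to $1$ at $X=1$. One observation worth making explicit: your gcd computation nowhere uses primality of $t$, so it actually shows the Welch gcd has degree $t-1$ for \emph{every} $t=2^m-1$; primality enters only through the minimality condition. The paper's approach buys brevity and defers all responsibility to \cite{gol}; yours makes the one analytically provable item verifiable on the page at the cost of a short Frobenius computation, and correctly separates what can be proved from what can only be certified by a (cited) computer search.
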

%
%
\section{\bf Zero divisors of the form $1+x^i+x^j\in\mathbb{F}_2[\langle x\rangle]$ for some special values of $(i,j)$}\label{S-3}

Let $G$ be a group, $x\in G$, $n:=o(x)>2$.  Suppose that $\alpha\in \mathbb{F}_2[G]$, $1\in supp(\alpha)\subseteq \langle x\rangle$ is a zero divisor so that $\alpha\beta=0$ for some non-zero $\beta\in \mathbb{F}_2[G]$. We may assume that  $1\in supp(\beta)$ and by \cite[Lemma 2.5]{Ab-Ta1}, if we choose $\beta$ of minimum support size with respect to the property $\alpha \beta=0$, then $ supp(\beta) \subseteq \langle supp(\alpha)\rangle\subseteq \langle x\rangle$. 

In this section, we focus on the cases that $(i,j)\in \{(1,-1),(1,2)\}\cup \{(1,3),(2,3),(2,n-1),(n-3,n-2),(1,n-2),(n-3,n-1)\}$. Firstly for the case $(i,j)\in \{(1,-1),(1,2)\}$, we show that $n$ must be a multiple of $3$ and $\beta=\beta'$ or $\beta=\beta'x^{-1}$ where $\beta':=\sum_{i=0}^{n-2}{x^{s_i}}$ such that $s_0=0$, $s_1=1$ and $s_i=s_{i-2}+3$, for all $i\in \{2,3,\dots,k\}$. Secondly for the case $(i,j)\in \{(1,3),(2,3),(2,n-1),(n-3,n-2),(1,n-2),(n-3,n-1)\}$, we show that $n$ must be a multiple of $7$. 

Throughout this section, let $A^{+}:=A+1$, $A^{2+}:=A+2$ and $A^{-}:=A-1$, where $n>1$ is a positive integer and $A$ is a subset of $\mathbb{Z}_n$.
\begin{lem}\label{T-1}
Let $A:=\{t_0,t_1,\dots,t_k\}$ be a subset of $\mathbb{Z}_n$ such that $t_0=0$, $1\leq t_1<t_2<\dots<t_k\leq n-1$ and the multiplicity of each element in $A \cup A^{+} \cup A^{2+}$ is $2$. Then for all $i\in \{1,2,\dots,k\}$, either $t_i=t_{i-1}+1$ or $t_i=t_{i-1}+2$.
\end{lem}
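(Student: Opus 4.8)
The plan is to reformulate the multiplicity hypothesis as a purely local condition on each element of $A$, and then read off the gap bound by a short contradiction argument.

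First I would compute, for a fixed element $t_i\in A$, the multiplicity with which $t_i$ occurs in the multiset $A\cup A^{+}\cup A^{2+}$ (all arithmetic in $\mathbb{Z}_n$). Since $t_i\in A$ contributes one occurrence, $t_i\in A^{+}$ exactly when $t_i-1\in A$, and $t_i\in A^{2+}$ exactly when $t_i-2\in A$, the hypothesis that this multiplicity equals $2$ becomes the statement that exactly one of $t_i-1$ and $t_i-2$ lies in $A$. For the argument I only need the weaker consequence: at least one of $t_i-1,t_i-2$ must belong to $A$.

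Next, I would fix $i\in\{1,2,\dots,k\}$ and argue by contradiction, assuming $t_i-t_{i-1}\geq 3$. Working with the chosen representatives $0=t_0<t_1<\dots<t_k\leq n-1$, the assumption gives $t_{i-1}<t_i-2<t_i-1<t_i$, so both $t_i-1$ and $t_i-2$ fall strictly between the consecutive elements $t_{i-1}$ and $t_i$ of $A$ and hence lie outside $A$. Then the multiplicity of $t_i$ is exactly $1$, contradicting the hypothesis; this forces $t_i-t_{i-1}\in\{1,2\}$, which is the desired conclusion.

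The single point that needs care — and the only genuine obstacle — is the passage between residues in $\mathbb{Z}_n$ and their integer representatives, because the phrase ``strictly between $t_{i-1}$ and $t_i$'' is a statement about integers, whereas $A$ lives in $\mathbb{Z}_n$. Here I would observe that for $i\geq 1$ we have $t_i\geq 1$, and under the contradiction hypothesis $t_i-t_{i-1}\geq 3$ with $t_{i-1}\geq 0$ we in fact get $t_i\geq 3$; consequently $t_i-1$ and $t_i-2$ are honest representatives in $\{0,1,\dots,n-1\}$, no wrap-around past $0$ occurs, and membership of $t_i-1,t_i-2$ in $A$ is indeed decided by their position in the increasing list $t_0,t_1,\dots,t_k$. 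This legitimizes the ``strictly between'' step and completes the proof.
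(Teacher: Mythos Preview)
Your proof is correct and is a genuinely cleaner route than the paper's. The paper argues forward: for each $t_i$ it locates the index $l$ with $t_i\equiv t_l+1$ or $t_i\equiv t_l+2\pmod n$ and then, via a careful chain of inequalities and a separate sub-argument ruling out $l=i-2$ (which even invokes the ``exactly $2$'' hypothesis to derive a triple coincidence), pins down $l=i-1$. You instead reformulate the multiplicity-$2$ condition at $t_i$ as ``at least one of $t_i-1,t_i-2$ lies in $A$'' and kill a gap $\geq 3$ in one line, since both candidates would fall strictly between $t_{i-1}$ and $t_i$. Your handling of the single subtlety---the wrap-around---is exactly right: under the contradiction hypothesis $t_i\geq 3$, so $t_i-1$ and $t_i-2$ are honest representatives in $\{1,\dots,n-1\}$ and the integer ordering decides membership in $A$. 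The payoff of your approach is brevity and the observation that only the lower bound on the multiplicity is needed; the paper's longer argument, on the other hand, extracts slightly more structure (it identifies which of the two alternatives holds according to whether $t_i\in A^{+}$ or $t_i\in A^{2+}$), information that is in any case recovered later in Corollary~\ref{T-3}.
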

\begin{proof}
Since $t_0=0$ and $1\leq t_1<t_2<\dots<t_k\leq n-1$, the following inequalities are satisfied:
\begin{equation}\label{e-1}
(0=)t_0\lneqq t_1\lneqq t_1+1\leq t_2\lneqq t_2+1\leq t_3\lneqq\dots\leq t_k\leq n-1\leq t_k+1
\end{equation}
Let $i\in \{1,2,\dots,k\}$. Since the multiplicity of each element in $A \cup A^{+} \cup A^{2+}$ is $2$,  either $t_i=t_l+1(mod\;n)$  or $t_i=t_l+2(mod\;n)$, for some $l\in \{0,1,\dots,k\}$.

$(1)$ Let $t_i=t_l+1(mod\;n)$. If $i\leq l$ then $1\leq t_i\leq t_l\lneqq t_l+1\leq n$, that is a contradiction with $t_i=t_l+1(mod\;n)$. Therefore, $i>l$ and so $t_l\lneqq t_i$ and $t_l\lneqq n-1$. Since $t_i\leq n-1$ and $t_l+1\leq n-1$, we have $t_i=t_l+1$. Therefore by \ref{e-1}, $l=i-1$ and $t_i=t_{i-1}+1$.

$(2)$ Let $t_i=t_l+2(mod\;n)$. Suppose, for a contradiction, that $i\leq l$. Then $t_i\leq t_l\lneqq t_l+2\leq n+1$. If $t_l+2\lneqq n+1$, then $1\leq t_i\lneqq  t_l+2\leq n$, that is a contradiction with $t_i=t_l+2(mod\;n)$. Therefore, $t_l+2= n+1$ and so $t_i=1$, $t_l=n-1$ and $(0=)t_0=t_l+1(mod\;n)$. Hence, $t_i=t_0+1=t_l+2(mod\;n)$ i.e., the multiplicity of an element in $A \cup A^{+} \cup A^{2+}$ is greater than $2$, a contradiction. Therefore, $i>l$ and so $t_l\lneqq t_i$ and $l<k$. Since $t_l\lneqq t_i\leq n-1$, we have $t_l+2\leq n$. If $t_l+2=n$, then $t_i=0(mod\;n)$, that is a contradiction with $1\leq t_i\leq n-1$. Therefore, $t_l+2\leq n-1$ and so $t_i=t_l+2$ because $1\leq t_i\leq n-1$ and $t_i=t_l+2(mod\;n)$.

Since $l<i$ we have $l\leq i-1$. In the following we show that $l=i-1$. Suppose, for a contradiction, that $l<i-2$. By \ref{e-1}, $t_{i-3}\lneqq t_{i-3}+1\leq t_{i-2}\lneqq t_{i-2}+1\leq t_{i-1}$. So, $t_{i-3}+2\leq t_{i-1}$. On the other hand, $l\leq i-3$ and so $t_l\leq t_{i-3}$. Therefore, $t_{i-1}\lneqq t_i=t_l+2\leq t_{i-3} +2$, that is a contradiction with $t_{i-3}+2\leq t_{i-1}$. Thus $i-2\leq l\leq i-1$. Now suppose, for a contradiction, that $l=i-2$. Then $t_i=t_{i-2}+2$. By \ref{e-1}, $t_{i-2}\lneqq t_{i-2}+1\leq t_{i-1}\lneqq t_i=t_{i-2}+2$ and so $t_{i-1}=t_{i-2}+1$. Therefore, $t_{i-2}+2=t_i=t_{i-1}+1$ i.e., the multiplicity of an element in $A \cup A^{+} \cup A^{2+}$ is greater than $2$, a contradiction. So, $l=i-1$ because $i-2\lneqq l\leq i-1$. Therefore, $t_i=t_{i-1}+2$.
\end{proof}
\begin{lem}\label{T-2}
Let $A:=\{t_0,t_1,\dots,t_k\}$ be a subset of $\mathbb{Z}_n$ such that the multiplicity of each element in $A \cup A^{+} \cup A^{2+}$ is $2$. Then $t_i+1\not\in A$ or $t_i+2\not\in A$, for all $i\in \{0,1,\dots,k\}$.
\end{lem}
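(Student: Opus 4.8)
The plan is to argue by contradiction, reducing the statement to a single multiplicity count. The key observation is that, since $A$, $A^{+}=A+1$ and $A^{2+}=A+2$ are genuine subsets of $\mathbb{Z}_n$, the multiset $A\cup A^{+}\cup A^{2+}$ is the disjoint union of three sets, each of which contributes at most $1$ to the multiplicity of any given value. Hence the multiplicity of a value $v$ is exactly the number of these three sets containing $v$,
\[
[\,v\in A\,]+[\,v\in A^{+}\,]+[\,v\in A^{2+}\,]\in\{0,1,2,3\}.
\]
Translating the two shifts, $v\in A^{+}$ is equivalent to $v-1\in A$ and $v\in A^{2+}$ is equivalent to $v-2\in A$, so the multiplicity of $v$ equals $[\,v\in A\,]+[\,v-1\in A\,]+[\,v-2\in A\,]$. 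This is precisely the bookkeeping behind the hypothesis: the multiplicity-$2$ condition records that $(1+x+x^{2})\beta$ vanishes over $\mathbb{F}_2$ with every residue class hit exactly twice, where $\beta$ is supported on $A$.

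Next I would fix $i\in\{0,1,\dots,k\}$ and assume, for a contradiction, that both $t_i+1\in A$ and $t_i+2\in A$. Applying the multiplicity formula to the single value $v:=t_i+2$, its three indicators are: $t_i+2\in A$ (true by assumption), $t_i+1\in A$ which yields $v\in A^{+}$ (true by assumption), and $t_i\in A$ which yields $v\in A^{2+}$ (true simply because $t_i$ is itself a member of $A$). Thus all three indicators equal $1$, so $t_i+2$ occurs with multiplicity $3$ in $A\cup A^{+}\cup A^{2+}$, contradicting the hypothesis that every occurring element has multiplicity exactly $2$. This forces $t_i+1\notin A$ or $t_i+2\notin A$, completing the proof.

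Since the whole argument is a one-line count, I do not anticipate a real obstacle; the only points that need care are the correct interpretation of ``multiplicity'' (each shifted copy contributes at most once, so the total is bounded by $3$) and the check that the three contributions to $v=t_i+2$ come from genuinely different constituent sets $A$, $A^{+}$ and $A^{2+}$. It is worth emphasizing that, unlike Lemma \ref{T-1}, this conclusion uses no ordering of the $t_i$: the statement is the ``unordered'' obstruction forbidding three consecutive residues $t_i,t_i+1,t_i+2$ from all lying in $A$, since that configuration is exactly what would overload the value $t_i+2$ and push its multiplicity up to $3$.
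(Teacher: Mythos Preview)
Your proof is correct and essentially identical to the paper's argument: both assume for a contradiction that $t_i+1\in A$ and $t_i+2\in A$, then observe that the element $t_i+2$ lies in all three of $A$, $A^{+}$, $A^{2+}$ (equivalently, $t_s=t_l+1=t_i+2$ in the paper's notation), giving multiplicity $3$ and the desired contradiction. Your added commentary on interpreting multiplicity and the contrast with the ordered setting of Lemma~\ref{T-1} is accurate but not needed for the proof itself.
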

\begin{proof}
Suppose, for a contradiction, that there are distinct $l,s\in \{0,1,\dots,k\}$ such that $t_l=t_i+1(mod\;n)$ and $t_s=t_i+2(mod\;n)$. Then $t_s=t_l+1=t_i+2(mod\;n)$ i.e., the multiplicity of an element in $A \cup A^{+} \cup A^{2+}$ is greater than $2$, a contradiction. This completes the proof.
\end{proof}
\begin{cor}\label{T-3}
Let $A:=\{t_0,t_1,\dots,t_k\}$ be a subset of $\mathbb{Z}_n$ such that $t_0=0$, $1\leq t_1<t_2<\dots<t_k\leq n-1$ and the multiplicity of each element in $A \cup A^{+} \cup A^{2+}$ is $2$. Then either $t_i=t_{i-1}+1$ and $t_{i-1}+2\not \in A$ or $t_i=t_{i-1}+2$ and $t_{i-1}+1\not \in A$, for all $i\in \{1,2,\dots,k\}$.
\end{cor}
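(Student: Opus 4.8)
The plan is to read off the statement directly by combining the two preceding results, Lemma \ref{T-1} and Lemma \ref{T-2}, which together already encode both halves of the desired dichotomy. Lemma \ref{T-1} supplies the ``step size'' information---each $t_i$ is obtained from $t_{i-1}$ by adding either $1$ or $2$---while Lemma \ref{T-2} supplies the ``exclusion'' information---for each index, the two potential successors $t_{i-1}+1$ and $t_{i-1}+2$ cannot both lie in $A$. The content of the corollary is simply that the exclusion which actually holds is forced by which step actually occurs.

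I would fix $i \in \{1,2,\dots,k\}$ and begin by invoking Lemma \ref{T-1}, whose hypotheses coincide with those of the corollary, to obtain that either $t_i = t_{i-1}+1$ or $t_i = t_{i-1}+2$ (as genuine equalities in $\{0,1,\dots,n-1\}$, not merely mod $n$). Next I would apply Lemma \ref{T-2} at the index $i-1$, which is legitimate since $i-1 \in \{0,1,\dots,k\}$; this yields $t_{i-1}+1 \notin A$ or $t_{i-1}+2 \notin A$. It then remains only to match the two dichotomies. In the first case $t_i = t_{i-1}+1$, the element $t_{i-1}+1$ equals $t_i$ and hence belongs to $A$, so the disjunction from Lemma \ref{T-2} forces $t_{i-1}+2 \notin A$, which is exactly the first alternative. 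In the second case $t_i = t_{i-1}+2$, the element $t_{i-1}+2$ equals $t_i \in A$, so Lemma \ref{T-2} now forces $t_{i-1}+1 \notin A$, which is the second alternative.

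There is no genuine obstacle here, since both ingredients are already in hand and the argument is a short logical combination. The only point that warrants care is the interplay between integer and modular statements: I must use that the equalities produced by Lemma \ref{T-1} are honest integer equalities, so that the element ``$t_{i-1}+1$'' (respectively ``$t_{i-1}+2$'') of $\mathbb{Z}_n$ appearing in Lemma \ref{T-2} genuinely coincides with the set member $t_i \in A$, rather than merely agreeing modulo $n$. Granting this---which the proof of Lemma \ref{T-1} explicitly guarantees by ruling out the wrap-around configurations---both cases close immediately and the corollary follows.
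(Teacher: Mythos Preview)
Your argument is correct and follows exactly the route the paper takes: the paper's own proof consists of the single line ``Lemmas \ref{T-1} and \ref{T-2} complete the proof,'' and you have simply spelled out the two-case match between those lemmas. The extra care you note about integer versus modular equalities is a fair observation, but no additional work is needed.
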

\begin{proof}
Lemmas \ref{T-1} and \ref{T-2} complete the proof.
\end{proof}
\begin{lem}\label{T-4}
Let $A:=\{t_0,t_1,\dots,t_k\}$ be a subset of $\mathbb{Z}_n$ such that $t_0=0$, $1\leq t_1<t_2<\dots<t_k\leq n-1$ and the multiplicity of each element in $A \cup A^{+} \cup A^{2+}$ is $2$. If $t_i-1\not\in A$ for some $i\in \{1,2,\dots,k\}$, then $t_i+1(mod\;n)\in A$. Furthermore if $i<k$, then $t_{i+1}=t_i+1$.
\end{lem}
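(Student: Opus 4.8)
The plan is to restate the multiplicity hypothesis as a purely local counting condition on $A$ and then read off both assertions directly. For $v\in\mathbb{Z}_n$, let $m(v)$ denote the number of the three sets $A$, $A^{+}$, $A^{2+}$ that contain $v$. Since $A^{+}=A+1$ and $A^{2+}=A+2$, an element $v$ lies in $A^{+}$ exactly when $v-1\in A$ and in $A^{2+}$ exactly when $v-2\in A$ (all arithmetic mod $n$). Thus $m(v)$ counts how many of $v,\,v-1,\,v-2$ belong to $A$, and it is precisely the multiplicity of $v$ in the multiset $A\cup A^{+}\cup A^{2+}$. The hypothesis that every occurring element has multiplicity $2$ is therefore equivalent to $m(v)\in\{0,2\}$ for every $v\in\mathbb{Z}_n$; in words, among any three consecutive residues $v-2,\,v-1,\,v$ either none or exactly two lie in $A$.

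First I would prove $t_i+1\pmod n\in A$. Fix $i\in\{1,\dots,k\}$ with $t_i-1\notin A$ (note $i\geq 1$ forces $t_i\geq 1$, so $t_i-1$ is a genuine residue and the hypothesis is literally meaningful). I apply the local condition at $v=t_i+1\pmod n$: the three residues feeding $m(v)$ are $t_i+1$, $t_i$ and $t_i-1$. By assumption $t_i\in A$ contributes $1$ while $t_i-1\notin A$ contributes $0$, so $m(t_i+1)=1+\chi$, where $\chi\in\{0,1\}$ records whether $t_i+1\pmod n\in A$. Since $m(t_i+1)\geq 1$ and must lie in $\{0,2\}$, it equals $2$, whence $\chi=1$ and $t_i+1\pmod n\in A$, as claimed. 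This argument is uniform and needs no case split for wraparound, because everything is computed in $\mathbb{Z}_n$.

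Finally I would deduce $t_{i+1}=t_i+1$ when $i<k$. Here $i<k$ gives $t_i<t_k\leq n-1$, so $t_i\leq n-2$ and hence $t_i+1\leq n-1$; in particular no reduction mod $n$ occurs and the element just produced is the genuine integer $t_i+1\in A$ with $t_i<t_i+1\leq n-1$. Because $t_{i+1}$ is by definition the least element of $A$ strictly exceeding $t_i$, and $t_i+1$ is such an element, we get $t_{i+1}\leq t_i+1$; combined with the trivial $t_{i+1}\geq t_i+1$ coming from $t_{i+1}>t_i$, this yields $t_{i+1}=t_i+1$. The only genuine point of care throughout is the modular bookkeeping, namely tracking when $t_i-1$ or $t_i+1$ wraps around $0$; but the first part sidesteps it by working in $\mathbb{Z}_n$ and the second part dispenses with it via the bound $i<k$, so I expect no real obstacle to remain.
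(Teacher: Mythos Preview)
Your proof is correct and is genuinely more direct than the paper's argument. The paper first invokes Lemma~\ref{T-1} to deduce $t_i=t_{i-1}+2$ from $t_i-1\notin A$, and then examines the multiplicity of $t_i+1$: since $t_i+1\in A^{+}$, one needs $t_i+1\in A$ or $t_i+1\in A^{2+}$; the latter would force $t_i=t_l+1$ for some $l$, giving $t_i\in A\cap A^{+}\cap A^{2+}$ (using $t_i=t_{i-1}+2$) and hence multiplicity $3$ at $t_i$, a contradiction. Your reformulation $m(v)\in\{0,2\}$ applied directly at $v=t_i+1$ bypasses this detour entirely: the window $\{t_i-1,t_i,t_i+1\}$ already contains exactly one known element of $A$ and one known non-element, so the third is forced. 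This is shorter and makes no appeal to Lemma~\ref{T-1}; the paper's route, on the other hand, keeps the argument phrased in the same language as the surrounding lemmas and reuses the structural fact that consecutive $t_i$ differ by $1$ or $2$. Your handling of the second assertion (no wraparound because $i<k$ gives $t_i\le n-2$, then a squeeze on $t_{i+1}$) is essentially the same as the paper's.
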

\begin{proof}
Suppose that $t_i-1\not\in A$ for some $i\in \{1,2,\dots,k\}$. Then $t_{i-1}\lneqq t_i-1$ because $t_{i-1}+1\leq t_i$. Therefore, $t_i>t_{i-1}+1$ and so by Lemma \ref{T-1}, $t_i=t_{i-1}+2$. Since the multiplicity of each element in $A \cup A^{+} \cup A^{2+}$ is $2$, there must be an $l\in \{0,1,\dots,k\}\setminus \{i\}$ such that either $t_i+1=t_l+2(mod\;n)$ or $t_i+1=t_l(mod\;n)$. If $t_i+1=t_l+2(mod\;n)$, then $t_{i-1}+2=t_i=t_l+1(mod\;n)$ i.e., the multiplicity of an element in $A \cup A^{+} \cup A^{2+}$ is greater than $2$, a contradiction. So, $t_l=t_i+1(mod\;n)$ for some $l\in \{0,1,\dots,k\}\setminus \{i\}$. Now if $i<k$, then 
\begin{equation}\label{e-2}
(0=)t_0\lneqq t_1\lneqq t_1+1\leq \dots \leq t_i\lneqq t_i+1\leq t_{i+1}\lneqq \dots\leq t_k\leq n-1
\end{equation}
By \ref{e-2}, $t_{i+1}=t_i+1$.
\end{proof}
\begin{lem}\label{T-5}
Let $A:=\{t_0,t_1,\dots,t_k\}$ be a subset of $\mathbb{Z}_n$ such that $t_0=0$, $1\leq t_1<t_2<\dots<t_k\leq n-1$ and the multiplicity of each element in $A \cup A^{+} \cup A^{2+}$ is $2$. If $0\in A^{+}$ then $t_k=n-1$ and $1\not\in A$. Also if $0\in A^{2+}$ then $t_k=n-2$ and $1\in A$.
\end{lem}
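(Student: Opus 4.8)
The plan is to treat everything as a counting problem for residues modulo $n$. Writing $A^{+}=A+1$ and $A^{2+}=A+2$ (reductions mod $n$), the hypothesis says that in the multiset $A\cup A^{+}\cup A^{2+}$ every residue that occurs does so with total multiplicity exactly $2$. The one structural fact I would use repeatedly is that, because $t_0=0$ and $0=t_0<t_1<\dots<t_k\le n-1$, a prescribed residue $r\in\mathbb{Z}_n$ is realized by at most one index in each of $A$, $A^{+}$, $A^{2+}$, and that index is pinned down by the strict ordering. In particular $0\in A$ (via $t_0$), $1\in A^{+}$ (via $t_0+1$) and $2\in A^{2+}$ (via $t_0+2$) hold automatically, and the residue $n-1$, if it occurs in $A$, can only equal $t_k$.

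For the first assertion, I would suppose $0\in A^{+}$. Then $t_i+1\equiv 0\pmod n$ for some $i$, so $t_i=n-1$; by the ordering this forces $i=k$ and hence $t_k=n-1$. To get $1\notin A$ I would count the multiplicity of the residue $1$: it already appears in $A^{+}$ (from $t_0+1=1$) and in $A^{2+}$ (from $t_k+2=n+1\equiv 1$), giving multiplicity $2$ from these two multisets alone. Since the total multiplicity of $1$ is exactly $2$, it cannot also lie in $A$, i.e. $t_1\ne 1$, which is precisely $1\notin A$.

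For the second assertion, I would suppose $0\in A^{2+}$, so $t_i+2\equiv 0\pmod n$ and $t_i=n-2$ for some $i$. First I rule out $t_k=n-1$: were $t_k=n-1$, the residue $0$ would occur in $A$ (via $t_0$), in $A^{+}$ (via $t_k+1\equiv 0$) and in $A^{2+}$ (via $t_i+2\equiv 0$), a multiplicity of $3$, contradicting the hypothesis. Since $t_k\ge t_i=n-2$ and $t_k\le n-1$, this yields $t_k=n-2$ (and $i=k$). Finally, to obtain $1\in A$ I again count the residue $1$: it occurs once in $A^{+}$ (from $t_0+1$), and it does not occur in $A^{2+}$, because $1\in A^{2+}$ would require some $t_j=n-1$, impossible now that the maximal element is $t_k=n-2$. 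So $1$ has multiplicity $1$ outside $A$, and to reach the required total of $2$ it must occur in $A$, i.e. $t_1=1$.

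The arguments are short, and the only thing needing genuine care is the modular bookkeeping: one must track exactly which index can realize a prescribed residue and handle the two wrap-around contributions ($t_k=n-1$ feeding $0$ into $A^{+}$, and $t_i=n-2$ feeding $0$ into $A^{2+}$) without double counting. I do not anticipate a serious obstacle beyond this; the whole lemma reduces to the observation that the residues $0$ and $1$ are forced to high multiplicity by $t_0$ together with the largest element $t_k$, which leaves no room for them to appear elsewhere.
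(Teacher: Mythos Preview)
Your proposal is correct and follows essentially the same approach as the paper: both arguments identify which $t_i$ forces $0\in A^{+}$ (resp.\ $0\in A^{2+}$), pin it down as the maximal element via the ordering, and then determine $1\notin A$ (resp.\ $1\in A$) by counting the multiplicity of the residue $1$ in the multiset $A\cup A^{+}\cup A^{2+}$. Your treatment of the second case is marginally cleaner---you rule out $t_k=n-1$ directly by observing it would give $0$ multiplicity $3$, whereas the paper splits into the subcases $l=k-1$ and $l=k$---but this is a cosmetic difference, not a substantive one.
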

\begin{proof}
Since the multiplicity of each element in $A \cup A^{+} \cup A^{2+}$ is $2$ and $(0=)t_0\in A$,  either $0=t_l+1(mod\;n)$  or $0=t_l+2(mod\;n)$, for some $l\in \{1,2,\dots,k\}$.

$(1)$ Let $0=t_l+1(mod\;n)$ i.e., $0\in A^{+}$. Therefore because $1\leq t_l\leq n-1$, $t_l+1=n$ and so $t_l=t_k=n-1$ and $1=t_k+2(mod\;n)$. If $1\in A$, then $t_1=1$ and $t_1=t_0+1=t_k+2(mod\;n)$ i.e., the multiplicity of an element in $A \cup A^{+} \cup A^{2+}$ is greater than $2$, a contradiction. So, $1\not\in A$.

$(2)$ Let $0=t_l+2(mod\;n)$ i.e., $0\in A^{2+}$. Therefore because $1\leq t_l\leq n-1$, we have $t_l+2=n$ and so $t_l=n-2$. So, $l=k-1$ or $l=k$ because $1\leq t_l\leq t_k\leq n-1$. If $l=k-1$ then $t_k=n-1$, $t_{k-1}=n-2$ and $t_0=t_{k-1}+2=t_k+1(mod\;n)$ i.e., the multiplicity of an element in $A \cup A^{+} \cup A^{2+}$ is greater than $2$, a contradiction. Therefore, $l=k$ and $t_k=n-2$. If $1\not\in A$, then there exist no $i\in \{1,2,\dots,k\}$ such that $t_0+1=t_i(mod\;n)$ or $t_0+1=t_i+2(mod\;n)$ because $2\leq t_i\leq n-2$ and $4\leq t_i+2\leq n$ i.e., the multiplicity of $t_0+1\in A \cup A^{+} \cup A^{2+}$ is $1$, a contradiction. So, $1\in A$.
\end{proof}
\begin{cor}\label{T-6}
Let $A:=\{t_0,t_1,\dots,t_k\}$ be a subset of $\mathbb{Z}_n$ such that $t_0=0$, $1\leq t_1<t_2<\dots<t_k\leq n-1$ and the multiplicity of each element in $A \cup A^{+} \cup A^{2+}$ is $2$. If $1\in A$ then $t_k=n-2$ and otherwise $t_k=n-1$.
\end{cor}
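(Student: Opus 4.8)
The plan is to derive everything from Lemma \ref{T-5} by pinning down where the second copy of the element $0$ in the multiset $A \cup A^{+} \cup A^{2+}$ can possibly come from. The whole content of Lemma \ref{T-5} is already phrased in terms of the two conditions $0\in A^{+}$ and $0\in A^{2+}$, so the corollary is essentially a matter of checking that exactly one of these two conditions must hold and that they line up with the dichotomy $1\in A$ versus $1\notin A$.

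First I would record the trivial but crucial observation that $0=t_0\in A$. Hence $0$ already occurs once in the multiset $A \cup A^{+} \cup A^{2+}$ as a member of $A$, and since $A$ is a set it cannot contribute the value $0$ a second time. Because the multiplicity of $0$ is exactly $2$ by hypothesis, the remaining copy of $0$ must lie in $A^{+}$ or in $A^{2+}$; that is, at least one of $0\in A^{+}$ or $0\in A^{2+}$ holds. These two alternatives are moreover mutually exclusive: were both to hold, then $0$ would appear once in $A$, once in $A^{+}$ and once in $A^{2+}$, giving multiplicity at least $3$, contrary to the hypothesis. So exactly one of $0\in A^{+}$, $0\in A^{2+}$ occurs.

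Then I would apply Lemma \ref{T-5} to each of the two cases. In the case $0\in A^{+}$ the lemma gives $t_k=n-1$ together with $1\notin A$, while in the case $0\in A^{2+}$ it gives $t_k=n-2$ together with $1\in A$. Since exactly one of the two cases holds, the membership of $1$ in $A$ distinguishes them: if $1\in A$ we cannot be in the first case (which forces $1\notin A$), so we are in the second and $t_k=n-2$; and if $1\notin A$ we cannot be in the second case, so we are in the first and $t_k=n-1$. This is precisely the assertion of the corollary.

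There is no real obstacle here, since Lemma \ref{T-5} simultaneously delivers both the value of $t_k$ and the membership status of $1$ in each case; all the genuine combinatorial work has already been front-loaded into that lemma. The only point requiring a moment's care is the mutual exclusivity argument, which guarantees that the two conclusions of Lemma \ref{T-5} do not overlap and therefore partition the situation cleanly according to whether $1\in A$.
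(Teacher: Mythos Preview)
Your proof is correct and follows essentially the same route as the paper: both arguments reduce the corollary to Lemma~\ref{T-5} by using the dichotomy between $0\in A^{+}$ and $0\in A^{2+}$. You make this dichotomy (and its mutual exclusivity via the multiplicity-$2$ hypothesis) explicit, whereas the paper's two-line proof leaves it implicit when it writes ``If $1\in A$, then by Lemma~\ref{T-5}, $0\in A^{2+}$''---a step that only follows once one has already observed, as you do, that $0$ must lie in exactly one of $A^{+}$, $A^{2+}$.
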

\begin{proof}
If $1\in A$, then by Lemma \ref{T-5}, $0\in A^{2+}$ and $t_k=n-2$. If $1\not\in A$, then by Lemma \ref{T-5}, $0\in A^{+}$ and $t_k=n-1$.
\end{proof}
\begin{thm}\label{T-7}
Let $A:=\{t_0,t_1,\dots,t_k\}$ be a subset of $\mathbb{Z}_n$ such that $t_0=0$, $1\leq t_1<t_2<\dots<t_k\leq n-1$ and the multiplicity of each element in $A \cup A^{+} \cup A^{2+}$ is $2$. Then for all $i\in \{2,3,\dots,k\}$, $t_i=t_{i-2}+3$ and one of the following cases is satisfied:
\begin{enumerate}
\item
If $1\in A$, then $t_1=1$, $t_k=n-2$ and we have
\begin{equation*}
\left\{
\begin{array}{lll}
t_i=t_{i-1}+1 & & i\text{ is odd}\\
t_i=t_{i-1}+2 & & i\text{ is even}\\
\end{array} \right.
\end{equation*}
\item
If $1\not\in A$, then $t_1=2$, $t_k=n-1$ and we have
\begin{equation*}
\left\{
\begin{array}{lll}
t_i=t_{i-1}+1 & & i\text{ is even}\\
t_i=t_{i-1}+2 & & i\text{ is odd}\\
\end{array} \right.
\end{equation*}
\end{enumerate}
\end{thm}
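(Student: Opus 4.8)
The plan is to reduce the statement to a single structural fact: the consecutive gaps $d_i:=t_i-t_{i-1}$ $(1\le i\le k)$ strictly alternate between $1$ and $2$. By Lemma~\ref{T-1} each $d_i\in\{1,2\}$, and the asserted identity $t_i=t_{i-2}+3$ for $i\in\{2,\dots,k\}$ is exactly the requirement $d_{i-1}+d_i=3$, i.e.\ that consecutive gaps are $1$ and $2$ in some order. Thus once alternation is proved, the two displayed systems in cases $(1)$ and $(2)$ follow at once by tracking the parity of $i$ from the value of the first gap $d_1$.

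First I would fix the endpoints and the initial gap. Applying Corollary~\ref{T-3} with $i=1$ gives $t_1=t_0+1=1$ or $t_1=t_0+2=2$. If $1\in A$ then necessarily $t_1=1$, so $d_1=1$; if $1\notin A$ then $t_1\ne 1$ forces $t_1=2$, so $d_1=2$. The value of $t_k$ is supplied directly by Corollary~\ref{T-6}: $t_k=n-2$ when $1\in A$ and $t_k=n-1$ when $1\notin A$. This already yields the stated values of $t_1$ and $t_k$ in both cases.

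The core step is to establish, for each $i$ with $1\le i<k$, the two implications that force alternation. (i) If $d_i=1$, then $t_i=t_{i-1}+1$, so Corollary~\ref{T-3} gives $t_{i-1}+2=t_i+1\notin A$; since Lemma~\ref{T-1} forces $t_{i+1}\in\{t_i+1,t_i+2\}$ and $t_{i+1}\in A$, the choice $t_{i+1}=t_i+1$ is impossible, whence $d_{i+1}=2$. (ii) If $d_i=2$, then $t_i=t_{i-1}+2$, so Corollary~\ref{T-3} gives $t_i-1=t_{i-1}+1\notin A$, and Lemma~\ref{T-4} then yields $t_{i+1}=t_i+1$, i.e.\ $d_{i+1}=1$. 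A straightforward induction starting from the known $d_1$ now shows the gaps alternate, giving $d_{i-1}+d_i=3$ and hence $t_i=t_{i-2}+3$ for all $i\in\{2,\dots,k\}$; reading off for which $i$ the gap equals $1$ versus $2$ produces precisely the two systems, according to whether $d_1=1$ (the case $1\in A$) or $d_1=2$ (the case $1\notin A$).

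The most delicate point is implication (ii): unlike (i), it does not come directly from Corollary~\ref{T-3} (which here only reports $t_{i-1}+1\notin A$), and one must invoke Lemma~\ref{T-4}. Equivalently, one can argue by multiplicity: were $d_{i+1}=2$, then the element $t_i+1$ would lie in $A^{+}$ (since $t_i\in A$) but in neither $A$ nor $A^{2+}$ (since $t_i+1\notin A$ and $t_{i-1}+1\notin A$), contradicting that its multiplicity in $A\cup A^{+}\cup A^{2+}$ is $2$. Two bookkeeping matters must also be watched: the implications are only applied for $i<k$, the boundary value $t_k$ being handled separately by Corollary~\ref{T-6}; and one should check that no modular wraparound interferes, which is guaranteed because all elements entering these arguments lie strictly between $0$ and $n-1$.
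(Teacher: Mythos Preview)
Your proposal is correct and follows essentially the same approach as the paper's proof: both arguments case-split on whether $1\in A$, obtain $t_1$ and $t_k$ from Lemma~\ref{T-1}/Corollary~\ref{T-6}, and then run an induction whose two alternating steps are precisely your implications (i) and (ii), invoking Corollary~\ref{T-3} together with Lemma~\ref{T-1} when the previous gap is $1$ and Corollary~\ref{T-3} together with Lemma~\ref{T-4} when the previous gap is $2$. Your introduction of the gap notation $d_i$ and the phrase ``the gaps alternate'' is a clean repackaging, but the logical content and the lemma invocations match the paper exactly.
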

\begin{proof}

$(1)$ Let $1\in A$. By  $1\leq t_1<t_2<\dots<t_k\leq n-1$, we have $t_1=1$. Also by Corollary \ref{T-6}, $t_k=n-2$. We argue by induction on $i$. For $i=2$, it follows from $t_1=t_0+1$ and Lemma \ref{T-2} that $2=t_0+2=t_1+1 \not\in A$. So by Lemma \ref{T-1}, $t_2=t_1+2=3$. Now assume inductively that for $i<k$, the statement is true.
\begin{enumerate}
\item[(a)]
If $i$ is odd, then $t_i=t_{i-2}+3=t_{i-1}+1$ and so by Corollary \ref{T-3}, $t_i+1=t_{i-1}+2\not \in A$. Therefore by Lemma \ref{T-1}, $t_{i+1}=t_i+2=t_{i-1}+3$. So, $i+1$ is even and $t_{i+1}=t_{(i+1)-1}+2=t_{(i+1)-2}+3$.
\item[(b)]
If $i$ is even, then $t_i=t_{i-2}+3=t_{i-1}+2$ and so by Corollary \ref{T-3}, $t_i-1=t_{i-1}+1\not \in A$. Therefore by Lemma \ref{T-4}, $t_{i+1}=t_i+1=t_{i-1}+3$. So, $i+1$ is odd and $t_{i+1}=t_{(i+1)-1}+1=t_{(i+1)-2}+3$.
\end{enumerate} 
$(2)$ Let $1\not\in A$. By Lemma \ref{T-1}, $t_1=t_0+2=2$ because $1=t_0+1\not\in A$. Also by Corollary \ref{T-6}, $t_k=n-1$. We argue by induction on $i$. For $i=2$, it follows from Lemma \ref{T-4} that $t_2=t_1+1=3$ because $1=t_1-1\not\in A$. Now assume inductively that for $i<k$, the statement is true.
\begin{enumerate}
\item[(a)]
If $i$ is even, then $t_i=t_{i-2}+3=t_{i-1}+1$ and so by Corollary \ref{T-3}, $t_i+1=t_{i-1}+2\not \in A$. Therefore by Lemma \ref{T-1}, $t_{i+1}=t_i+2=t_{i-1}+3$. So, $i+1$ is odd and $t_{i+1}=t_{(i+1)-1}+2=t_{(i+1)-2}+3$.
\item[(b)]
If $i$ is odd, then $t_i=t_{i-2}+3=t_{i-1}+2$ and so by Corollary \ref{T-3}, $t_i-1=t_{i-1}+1\not \in A$. Therefore by Lemma \ref{T-4}, $t_{i+1}=t_i+1=t_{i-1}+3$. So, $i+1$ is even and $t_{i+1}=t_{(i+1)-1}+1=t_{(i+1)-2}+3$.
\end{enumerate} 
This completes the proof.
\end{proof}

\begin{proof}[Proof of Theorem \ref{T-8}]
Note that $n:=o(x)$ is finite because the group algebra of an infinite cyclic group has no zero-divisors (see \cite[Theorem 26.2]{pass2}). By \cite[Lemma 2.5]{Ab-Ta1}, if we choose $\beta$ of minimum support size with respect to the property $\alpha \beta=0$, then $ supp(\beta) \subseteq \langle supp(\alpha)\rangle\subseteq \langle x\rangle$. Let $\beta=\sum_{i=0}^{k}{x^{t_i}}$ such that $t_0=0$ and $1\leq t_1<t_2<\dots<t_k\leq n-1$. If $\alpha=1+x+x^{-1}$ and $\alpha'=1+x+x^2$ then $\alpha'=x\alpha$. Also $\alpha\gamma=0$ if and only if $\alpha'\gamma=0$, for some $\gamma\in \mathbb{F}_2[G]$. Let $A:=\{t_0,t_1,\dots,t_k\}$ be a subset of $\mathbb{Z}_n$. Since $(1+x+x^2)\beta=0$, the multiplicity of each element in $A \cup A^{+} \cup A^{2+}$ is $2$. So by Lemma \ref{T-7}, $t_i=t_{i-2}+3$ for all $i\in \{2,3,\dots,k\}$. Also, if $1\in A$ then $t_1=1$ and $t_k=n-2$, and if $1\not\in A$ then $t_1=2$ and $t_k=n-1$. Therefore, if $x\in supp(\beta)$ then $\beta=\beta'$ and otherwise $\beta=\beta'x^{-1}$ because $1=x^0=x^n$. This completes the proof.
\end{proof}
\begin{lem}\label{T-9}
Let $A$ be a subset of $\mathbb{Z}_n$ such that the multiplicity of each element in $A \cup A^{+} \cup A^{-}$ is at least $2$. Then $\mathbb{Z}_n=A \cup A^{+}=A^{-} \cup A=A^{+} \cup A^{-}$ and so $|A \cap A^{-}|=|A \cap A^{+}|=|A^{+} \cap A^{-}|$. Furthermore, if the multiplicity of each element in $A \cup A^{+} \cup A^{-}$ is exactly $2$, then $n$ must be a multiple of $3$, $|A|=2n/3$ and $|A \cap A^{-}|=|A \cap A^{+}|=|A^{+} \cap A^{-}|=n/3=|A|/2$. 
\end{lem}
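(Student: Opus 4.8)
The plan is to translate the multiset hypothesis into a statement about three consecutive residues lying in $A$, after which everything is read off by counting. First I would introduce the indicator function $\chi_A\colon\mathbb{Z}_n\to\{0,1\}$ of $A$ and record the two elementary equivalences $m\in A^{+}\iff m-1\in A$ and $m\in A^{-}\iff m+1\in A$. These give that the multiplicity of a residue $m$ in the multiset $A\cup A^{+}\cup A^{-}$ equals $\chi_A(m-1)+\chi_A(m)+\chi_A(m+1)$. Hence the hypothesis ``multiplicity at least $2$'' says precisely that, among any three consecutive residues $m-1,m,m+1$, at least two belong to $A$. This reformulation is the crux of the argument; the rest is bookkeeping.

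Next I would establish the three coverings. To see $\mathbb{Z}_n=A\cup A^{+}$, suppose some $m$ lies in neither set; then $\chi_A(m)=\chi_A(m-1)=0$, so the window condition at $m$ forces $\chi_A(m+1)\geq 2$, which is impossible for an indicator value. The identities $\mathbb{Z}_n=A^{-}\cup A$ and $\mathbb{Z}_n=A^{+}\cup A^{-}$ follow by the identical argument applied at the same window: in each case, failure at $m$ would make two of the three values $\chi_A(m-1),\chi_A(m),\chi_A(m+1)$ vanish and thereby force the third to equal $2$, a contradiction.

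For the cardinalities I would use that translation by a fixed element of $\mathbb{Z}_n$ is a bijection, so $|A|=|A^{+}|=|A^{-}|$. Applying inclusion--exclusion to each covering, for instance $n=|A\cup A^{+}|=2|A|-|A\cap A^{+}|$, yields $|A\cap A^{+}|=|A\cap A^{-}|=|A^{+}\cap A^{-}|=2|A|-n$, which proves the first assertion. For the ``furthermore'' part I would sum the multiplicities over all of $\mathbb{Z}_n$: the total is $|A|+|A^{+}|+|A^{-}|=3|A|$, while if every multiplicity equals exactly $2$ the total is $2n$. Thus $3|A|=2n$, which forces $3\mid n$ and $|A|=2n/3$; substituting back gives the common intersection size $2|A|-n=n/3=|A|/2$.

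I do not anticipate a serious obstacle. The one step that genuinely needs care is the passage from the multiset multiplicity to the indicator sum over a window of three consecutive residues, together with correct treatment of the wrap-around in $\mathbb{Z}_n$; once that identity is in hand, the coverings and the counting are immediate.
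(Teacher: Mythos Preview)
Your argument is correct. It differs from the paper's proof mainly in how the covering $\mathbb{Z}_n=A\cup A^{+}$ is established and in how the ``furthermore'' part is counted. The paper derives from the hypothesis the three inclusions $A\subseteq A^{+}\cup A^{-}$, $A^{+}\subseteq A^{-}\cup A$, $A^{-}\subseteq A\cup A^{+}$, then proves by induction that $A+i\subseteq A\cup A^{+}$ for all $i$, and concludes via $\mathbb{Z}_n=\bigcup_i(A+i)$; for the second part it uses three-set inclusion--exclusion together with $A\cap A^{+}\cap A^{-}=\varnothing$ to get $n=3|A|-3t$, and then solves this simultaneously with $n=2|A|-t$. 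Your indicator-function reformulation bypasses the induction entirely: once the multiplicity of $m$ is identified with $\chi_A(m-1)+\chi_A(m)+\chi_A(m+1)$, each covering is a one-line contradiction, and summing the multiplicities gives $3|A|=2n$ immediately without having to set up and solve a pair of equations. Your route is shorter and more transparent; the paper's inductive shift argument, on the other hand, makes the structural reason ($A\cup A^{+}$ absorbs all translates of $A$) more explicit and would generalise more readily to analogous problems with longer windows.
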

\begin{proof}
By the hypothesis on the multiplicities, we have the following three inclusions:
$A \subseteq A^{+} \cup A^{-}$, $A^{+} \subseteq A^{-} \cup A$ and $A^{-} \subseteq A \cup A^{+}$. 

For the first part, by induction on $i$, we prove that $A+i \subseteq A \cup A^{+}$ for all $i\in \mathbb{N}$. It is clear for $i=0,1$. For $i=2$, it follows from the second inclusion that $A^{2+} \subseteq A \cup A^{+}$. Now assume inductively that $A+i \subseteq A \cup A^{+}$. It follows that 
$A+(i+1) \subseteq A^{+} \cup A^{2+} \subseteq A^{+} \cup (A \cup A^{+})= A \cup A^{+}$.
This completes the induction. Now as $\mathbb{Z}_n= \bigcup_{i=1}^n A+i$, $\mathbb{Z}_n=A \cup A^{+}$ and the above first three inclusions imply that $\mathbb{Z}_n=A \cup A^{-}=A^{+} \cup A^{-}$. Therefore $n=2|A|-|A \cap A^{-}|=2|A|-|A \cap A^{+}|=2|A|-|A^{+} \cap A^{-}|$ and so
$t:=|A \cap A^{-}|=|A \cap A^{+}|=|A^{+} \cap A^{-}|$. 

For the second part, suppose that the multiplicity of each element in $A \cup A^{+} \cup A^{-}$ is exactly $2$. Thus, $A \cap A^{+} \cap A^{-}=\varnothing$. Therefore $n=3|A|-3t=3(|A|-t)$ and so $n$ is a multiple of $3$. Also we have $n=3|A|-3t=2|A|-t$. Thus $t=|A|/2$ and so $|A|=2n/3$. This completes the proof. 
\end{proof}

\begin{proof}[Proof of Theorem \ref{T-10}]
Note that $n:=o(x)$ is finite because the group algebra of an infinite cyclic group has no zero-divisors (see \cite[Theorem 26.2]{pass2}). By \cite[Lemma 2.5]{Ab-Ta1}, if we choose $\beta$ of minimum support size with respect to the property $\alpha \beta=0$, then $ supp(\beta) \subseteq \langle supp(\alpha)\rangle\subseteq \langle x\rangle$. Let $\beta=\sum_{i=0}^{k}{x^{t_i}}$ such that $t_0=0$ and $1\leq t_1<t_2<\dots<t_k\leq n-1$. If $\alpha=1+x+x^{-1}$ and $\alpha'=1+x+x^2$ then $\alpha'=x\alpha$. Also $\alpha\gamma=0$ if and only if $\alpha'\gamma=0$, for some $\gamma\in \mathbb{F}_2[G]$. Let $A:=\{t_0,t_1,\dots,t_k\}$ be a subset of $\mathbb{Z}_n$. Since $(1+x+x^{-1})\beta=0$, the multiplicity of each element in $A \cup A^{+} \cup A^{-}$ is $2$. So by Lemma \ref{T-9}, $n$ must be a multiple of $3$. Also, $|supp(\beta)|=|A|=2n/3$.
\end{proof}

\begin{proof}[Proof of Theorem \ref{T-11}]
Note that $n:=o(x)$ is finite because the group algebra of an infinite cyclic group has no zero-divisors (see \cite[Theorem 26.2]{pass2}). By \cite[Lemma 2.5]{Ab-Ta1}, if we choose $\beta$ of minimum support size with respect to the property $\alpha \beta=0$, then $ supp(\beta) \subseteq \langle supp(\alpha)\rangle\subseteq \langle x\rangle$.  For all $\gamma=\sum_{k=0}^{n-1}{a_k x^k}$ in the group algebra $\mathbb{F}_2[\langle x\rangle]$, a polynomial $\gamma(x):=\sum_{k=0}^{n-1}{a_k x^k}$ from the ring $\mathbb{F}_2[x]$ can be corresponded to $\gamma$. So, $\alpha(x),\beta(x)\not\in \langle x^n-1\rangle$ and $\alpha(x)\beta(x)\in \langle x^n-1\rangle$. Let $\alpha=1+x+x^3$. So, $\alpha(x)=1+x+x^3$ is an irreducible polynomial in $\mathbb{F}_2[\langle x\rangle]$ and $\alpha(x) \vert x^n-1$, because $\alpha(x)\beta(x)\in \langle x^n-1\rangle$ and $\alpha(x),\beta(x)\not\in \langle x^n-1\rangle$. Therefore, the reciprocal polynomial of $\alpha(x)$ i.e., $\alpha^{*}(x)=x^{deg(\alpha(x))}\alpha(\frac{1}{x})=1+x^2+x^3$ divides $x^n-1$, too. Also, it is easy to see that $\alpha(x)$ and $\alpha^{*}(x)$ do not divide $x-1$. So, $x^7-1=\alpha(x)\alpha^{*}(x)(x-1)$ divides $x^n-1$. Furthermore, it is easy to see that $x
^a-1 \vert x^b-1$ if and only if $a\vert b$. Therefore, $n$ must be a multiple of $7$ because $x^7-1$ divides $x^n-1$. The same discussion for the case that $\alpha=1+x^2+x^3$ completes the proof because the reciprocal polynomial of $\alpha(x)$ is $\alpha^{*}(x)=1+x+x^3$.
\end{proof}
\begin{prop}\label{T-12}
Let $G$ be a group, $x\in G$, $n:=o(x)>2$, $\alpha\in \{1+x^2+x^{n-1},1+x^{n-3}+x^{n-2},1+x+x^{n-2},1+x^{n-3}+x^{n-1}\}\subset \mathbb{F}_2[G]$ be a zero divisor. Then $n$ must be a multiple of $7$.
\end{prop}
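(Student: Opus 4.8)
The plan is to show that each of the four trinomials listed is, up to multiplication by a power of $x$, equal to one of the two trinomials $1+x+x^3$ and $1+x^2+x^3$ already treated in Theorem \ref{T-11}, and then to invoke that theorem. Since $x$ has finite order (which I would establish first, exactly as in the proof of Theorem \ref{T-11}: otherwise $\langle x\rangle$ is infinite cyclic and $\mathbb{F}_2[\langle x\rangle]$ is a domain by \cite[Theorem 26.2]{pass2}, contradicting the existence of a nonzero $\beta$ with $\alpha\beta=0$ and $supp(\beta)\subseteq\langle x\rangle$ furnished by \cite[Lemma 2.5]{Ab-Ta1}), every power $x^k$ is a unit of $\mathbb{F}_2[G]$.

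First I would record the identities obtained from $x^n=1$:
\begin{align*}
x(1 + x^2 + x^{n-1}) &= 1 + x + x^3, & x^3(1 + x^{n-3} + x^{n-2}) &= 1 + x + x^3,\\
x^2(1 + x + x^{n-2}) &= 1 + x^2 + x^3, & x^3(1 + x^{n-3} + x^{n-1}) &= 1 + x^2 + x^3.
\end{align*}
Thus each of the four given trinomials is $x^{-k}$ times a member of $\{1+x+x^3,\,1+x^2+x^3\}$ for a suitable $k\in\{1,2,3\}$.

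Finally, I would use that being a zero divisor is invariant under multiplication by a unit: if $\alpha$ is a zero divisor, say $\alpha\beta=0$ with $\beta\neq0$, then $(x^k\alpha)\beta=0$ with $x^k\alpha\neq0$, and conversely $\alpha=x^{-k}(x^k\alpha)$ recovers the original factor. Hence each of the four trinomials is a zero divisor if and only if the corresponding element of $\{1+x+x^3,\,1+x^2+x^3\}$ is, and Theorem \ref{T-11} then forces $7\mid n$. There is essentially no serious obstacle here: the content is entirely in spotting the correct unit multiples, and verifying the four displayed identities is a one-line computation in each case; the only point requiring any care is the preliminary reduction guaranteeing that $x^k$ is a unit, i.e. that $n$ is finite.
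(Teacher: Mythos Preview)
Your proof is correct and follows essentially the same approach as the paper's own proof: you establish finiteness of $n$, record exactly the same four unit multiples $x\alpha$, $x^3\alpha$, $x^2\alpha$, $x^3\alpha$ reducing each trinomial to $1+x+x^3$ or $1+x^2+x^3$, and then invoke Theorem~\ref{T-11}. The arguments are in fact identical in all details.
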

\begin{proof}
Note that $n:=o(x)$ is finite because the group algebra of an infinite cyclic group has no zero-divisors (see \cite[Theorem 26.2]{pass2}). By \cite[Lemma 2.5]{Ab-Ta1}, if we choose $\beta$ of minimum support size with respect to the property $\alpha \beta=0$, then $ supp(\beta) \subseteq \langle supp(\alpha)\rangle\subseteq \langle x\rangle$.  If $\alpha$ is equal to $1+x^2+x^{n-1}$, $1+x^{n-3}+x^{n-2}$, $1+x+x^{n-2}$ or $1+x^{n-3}+x^{n-1}$, then $\alpha'\beta=0$ where $\alpha'$ is equal to $x\alpha=1+x+x^3$, $x^3\alpha=1+x+x^3$, $x^2\alpha=1+x^2+x^3$ or $x^3\alpha=1+x^2+x^3$, respectively. So by Theorem \ref{T-11}, $n$ must be a multiple of $7$. This completes the proof.
\end{proof}
%
%


\begin{thebibliography}{40}
\bibitem{Ab-Ta1}  A. Abdollahi and Z. Taheri, Zero divisors and units with small supports in group algebras of torsion-free groups,  Comm. Algebra, 46 No. 2 (2018)  887-925. 
\bibitem{AJ} A. Abdollahi and F. Jafari, Zero divisor and unit elements with supports of size 4 in group algebras of torsion-free groups, to appear in Comm. Algebra.
\bibitem{gol}
S. W. Golomb and P. F. Lee, Irreducible polynomials which divide trinomials over $GF(2)$, IEEE Trans. Inf. Theory, 53 (2007), no. 2, 768-774.
\bibitem{K} I. Kaplansky, Problems in the theory of rings, in: Report of a Conference on Linear
Algebras, pp. 1-3, National Academy of Sciences-National Research Council,
publ. 502, Washington, 1957.
\bibitem{K2} I. Kaplansky, ‘Problems in the theory of rings’ revisited, Amer. Math. Monthly 77
(1970), 445–454.
\bibitem{nied}
R. Lidl and H. Niederreiter, Finite Fields, 2nd, Cambridge Univ. Press, Cambridge, 1997.
\bibitem{pascal}
P. Schweitzer, On zero divisors with small support in group rings of torsion-free groups, J. Group Theory, 16 (2013), no. 5, 667-693.
\bibitem{pass2}
D. S. Passman, Infinite group rings, Dekker, New York, 1971.
\bibitem{robinson}
D. J. S. Robinson, A course in the theory of groups, 2nd, Springer, New York, 1995.
\bibitem{S} L. J. Soelberg, Finding torsion-free groups which do not have the unique product property, M.Sc. Thesis, Brigham Young University, 2018.  \href{https://scholarsarchive.byu.edu/etd/6932}{All Theses and Dissertations. 6932.}
\end{thebibliography}
\end{document}